\documentclass{amsart}
\usepackage{amsfonts}
\usepackage{amsmath}
\usepackage{amssymb}
\usepackage{amsthm}
\usepackage{graphicx}

\newtheorem{theorem}{Theorem}
\newtheorem{prop}{Proposition}
\newtheorem{lemma}{Lemma}
\newtheorem{rem}{Remark}
\newtheorem{exmp}{Example}
\newtheorem{cor}{Corollary}
\newtheorem{fact}{Fact}

\begin{document}

\title{Zigzag structure of thin chamber complexes}
\author{Michel Deza, Mark Pankov}
\subjclass[2000]{}
\keywords{zigzag, thin chamber complex, Coxeter complex}
\address{Michel Deza: \'Ecole Normale Sup\'erieure, Paris, France}
\email{Michel.Deza@ens.fr}
\noindent
\address{Mark Pankov: Department of Mathematics and Computer Science, 
University of Warmia and Mazury, Olsztyn, Poland}
\email{pankov@matman.uwm.edu.pl}

\begin{abstract}
Zigzags in thin chamber complexes are investigated,
in particular, all zigzags in the Coxeter complexes are described.
Using this description, we show that the lengths of all zigzags in 
the simplex $\alpha_{n}$, the cross-polytope  $\beta_{n}$, the $24$-cell, the icosahedron and the $600$-cell 
are equal to the Coxeter numbers of
$\textsf{A}_{n}$, $\textsf{B}_{n}=\textsf{C}_{n}$, $\textsf{F}_{4}$ and $\textsf{H}_{i}$, $i=3,4$, respectively.
We also discuss in which cases two faces in a thin chamber complex can be connected by a zigzag.
\end{abstract}
\maketitle

\section{Introduction}

In the present paper, we investigate zigzags in {\it thin chamber complexes} which are interesting in the context of  Tits buildings \cite{Tits}.
In fact, these are abstract polytopes, where all facets are simplices.
The well-known Coxeter complexes form an important subclass of thin chamber complexes.
The zigzags in a complex are  the orbits of the action of a special operator $T$ on the set of flags of this complex.
The operator $T$ transfers every flag $F$ to a flag whose faces are adjacent to the faces of $F$. 
The main question under discussion is the following: in which cases can two faces of a complex be connected by a zigzag?

The notion of {\it Petrie polygon} for polytopes is one of the central concepts of famous Coxeter's book \cite{Cox73}.
For embedded graph the same objects are appeared as 
{\it zigzags} in \cite{zig1, zig2, D-DS},  {\em geodesics} in  \cite{GrunMo63} and  {\it left-right paths} in \cite{Sh75}.
Their high-dimensional analogues are considered in \cite{DDS-paper} and \cite{Williams}, see also \cite[Chapter 8]{D-DS}.
Following \cite{zig1,zig2,DDS-paper,D-DS} we call such objects {\it zigzags}.

Consider an abstract polytope ${\mathcal P}$ of rank $n$ and one of its flags $F$.
Let $X_{i}$ be the $i$-face from this flag. There is the unique $i$-face $X'_{i}$ adjacent to $X_{i}$ and incident to all other faces from ${\mathcal P}$.
We define $\sigma_{i}(F)$ as the flag obtained from $F$ by replacing $X_{i}$ on $X'_{i}$ and introduce the operator 
$$T=\sigma_{n-1}\dots\sigma_{0}$$ which acts on the set of all flags.  
In \cite{DDS-paper,D-DS} zigzags are defined as the orbits of this action.
Similarly, for every permutation $\delta$ on the set $\{0,1,\dots, n-1\}$ we consider the operator 
$$T_{\delta}=\sigma_{\delta(n-1)}\dots\sigma_{\delta(0)}$$ and come to {\it generalized zigzags}.
Note that such objects were first considered in \cite{Williams} and named {\it Petrie schemes}.

Now, let us consider the associated flag complex ${\mathfrak F}({\mathcal P})$ which is a thin chamber complex of the same rank $n$.
In Subsection 3.4, we show that there is a natural one-to-one correspondence between  
generalized zigzags in ${\mathcal P}$ and  zigzags in ${\mathfrak F}({\mathcal P})$.
For this reason, it is natural to restrict the general zigzag theory on the case of zigzags in thin chamber complexes
and associated with the operator $T$ only.

We describe all zigzags of Coxeter complexes in terms of Coxeter elements
and show that the length of zigzags depends on the associated Coxeter number (Subsection 3.3).
For example, the Coxeter complexes for $\textsf{A}_{n}$, $\textsf{B}_{n}=\textsf{C}_{n}$, $\textsf{F}_{4}$ and $\textsf{H}_{i}$, $i=3,4$ are 
the flag complexes of the simplex $\alpha_{n}$, the cross-polytope  $\beta_{n}$, the $24$-cell, the icosahedron and the $600$-cell (respectively)
and all zigzags in these complexes are induced by generalized zigzags in the above mentioned polytopes.
In particular, this implies that the lengths of zigzags in the polytopes are equal to the corresponding Coxeter numbers.
On the other hand, the Coxeter systems $\textsf{D}_{n}$ and $\textsf{E}_{i}$, $i=6,7,8$
are related to the half-cube $\frac{1}{2}\gamma_{n}$ and the $\textsf{E}$-polytopes  $2_{21}$, $3_{21}$, $4_{21}$ (respectively).
However, the associated Coxeter numbers are different from the lengths of zigzags in the polytopes. We explain why this occurs.

We say that two faces are $z$-{\it connected} if there is a zigzag joined them. 
If a thin chamber complex is $z$-{\it simple}, 
i.e. every zigzag is not self-intersecting, then the $z$-connectedness of 
any pair of faces implies that the complex is a simplex.
The main result of this part concerns the $z$-connectedness of facets (Theorem \ref{theorem4-1}).
We consider the graph consisting of all facets,  whose edges are pairs intersecting in a ridge.
We determine a class of path geodesics in this graph which can be extended to zigzags.
For $z$-simple complexes, this gives  the full description of path geodesics extendible to zigzags.

\section{Thin simplicial complexes}

\subsection{Definitions and examples}
Let $\Delta$ be an {\it abstract simplicial complex} over a finite set $V$,
i.e. $\Delta$ is formed by subsets of $V$
such that every one-element subset belongs to $\Delta$ 
and for every $X\in \Delta$ all  subsets of $X$ belong to $\Delta$.
Elements of $\Delta$ are called {\it faces} and maximal faces are said to be {\it facets}.
We say that $X\in \Delta$ is a $k$-{\it face} if $|X|=k+1$.
Recall that $0$-faces and $1$-faces are known as {\it vertices} and {\it edges},
respectively, and the empty set is the unique $(-1)$-face.
We will always suppose that the simplicial complex $\Delta$ is {\it pure}, 
i.e. all facets are of the same cardinality $n$.
Then the number $n$ is the {\it rank} of the simplicial complex.

Our second assumption is that  $\Delta$ is {\it thin}.
This means that every ridge, i.e. $(n-2)$-face, is contained in precisely two distinct facets (cf. \cite[Section 1.3]{Tits}).
Two facets are said to be {\it adjacent} if their intersection is a ridge.

Let $k$ be a natural number not greater than $n-2$. 
If a $(k-1)$-face $Y$ is contained in a $(k+1)$-face $Z$,
then there are precisely two $k$-faces $X_{1}$ and $X_{2}$ such that 
$$Y\subset X_{i}\subset Z\;\mbox{ for }\;i=1,2$$
(the set $Z\setminus Y$ contains only two vertices and $X_{1},X_{2}$ are the $k$-faces containing $Y$ and one of these vertices).
We say that two distinct $k$-faces $X_{1}$ and $X_{2}$ are {\it adjacent}
if there exist a $(k-1)$-face $Y$ and a $(k+1)$-face $Z$
satisfying the above inclusion.

For every $k\in \{0,1,\dots,n-1\}$ we denote by $\Gamma_{k}(\Delta)$
the graph whose vertex set consists of all $k$-faces and whose edges are pairs of adjacent faces.
This graph is one of so-called {\it Wythoff kaleidoscopes} \cite{Cox73}.
Following \cite[Section 1.3]{Tits} we say that $\Delta$ is a {\it chamber} complex if the graph $\Gamma_{n-1}(\Delta)$ is connected.
This condition guarantees that $\Gamma_{k}(\Delta)$ is connected for every $k$.
Indeed, for any two $k$-faces $X,Y$ with $k<n-1$ we take facets
$X',Y'$ containing $X,Y$ (respectively);
using a path in $\Gamma_{n-1}(\Delta)$ connecting $X'$ with $Y'$
we construct a path of $\Gamma_{k}(\Delta)$ connecting $X$ with $Y$.

Let us consider a connected simple graph. 
The {\it path distance} $d(v,w)$ between vertices $v$ and $w$ in this graph is the smallest number $d$
such that there is a path of length $d$ connecting these vertices (see, for example, \cite[Section 15.1]{DD}).
Every path of length $d(v,w)$ connecting $v$ and $w$ is called a {\it geodesic}.
The path distance on $\Gamma_{k}(\Delta)$ will be considered in Section 4.

\begin{exmp}\label{exmp2-1}{\rm
The {\it $n$-simplex} $\alpha_{n}$ is the simplicial complex
whose vertex set is the $(n+1)$-element set 
$$[n+1]=\{1,\dots,n+1\}$$
and whose non-empty faces are proper subsets of $[n+1]$.
The {\it cross-polytope} $\beta_{n}$ is the simplicial complex whose vertex set is the set
$$[n]_{\pm}=\{1,\dots,n,-1,\dots,-n\}$$
and whose faces are all subsets $X\subset [n]_{\pm}$ such that for every $i\in X$ we have $-i\not\in X$.
It is clear that $\alpha_{n}$ and $\beta_{n}$ are thin chamber complexes of rank $n$.
Every $\Gamma_{k}(\alpha_{n})$ is the Johnson graph $J(n+1,k+1)$ and 
$\Gamma_{n-1}(\beta_{n})$ is the $n$-dimensional cube graph.
}\end{exmp}

\begin{exmp}\label{exmp2-2}{\rm
Let $\Delta_{1}$ and $\Delta_{2}$ be thin chamber complexes over sets $V_{1}$ and $V_{2}$, respectively. 
The {\it join} $\Delta_{1}*\Delta_{2}$ is the simplicial complex whose vertex set is 
the disjoint union $V_{1}\sqcup V_{2}$ and whose faces are all subsets of type $X_{1}\sqcup X_{2}$, where $X_{i}\in \Delta_{i}$.
This is a thin chamber complex of rank $n_{1}+n_{2}$, where $n_{i}$ is the rank of $\Delta_{i}$.
}\end{exmp}

\begin{exmp}\label{exmp2-3}{\rm
Let ${\mathcal P}$ be a partially ordered set presented as the disjoint union of subsets 
${\mathcal P}_{-1},{\mathcal P}_{0},\dots,{\mathcal P}_{n-1},{\mathcal P}_{n}$ 
such that for any $X\in {\mathcal P}_i$ and $Y\in {\mathcal P}_{j}$ satisfying $X<Y$ we have $i<j$.
The elements of ${\mathcal P}_{k}$ are called $k$-{\it faces}.
There is the unique $(-1)$-face and the unique $n$-face which are the minimal and maximal elements, respectively.
Also, we suppose that every {\it flag}, i.e. a maximal linearly ordered subset,
contains precisely $n$ elements distinct from the $(-1)$-face and $n$-face.
Then ${\mathcal P}$ is an {\it abstract polytope} of rank $n$ if the following conditions hold:
\begin{enumerate}
\item[(P1)] If $k\in \{0,1,\dots,n-1\}$, then 
for any $(k-1)$-face $Y$ and $(k+1)$-face $Z$ satisfying $Y<Z$
there are precisely two $k$-faces $X_{i}$, $i=1,2$ such that $Y<X_{i}<Z$.
\item[(P2)] ${\mathcal P}$ is strongly connected (see, for example, \cite{McM-S}).
\end{enumerate}
Every thin chamber complex of rank $n$ can be considered as an abstract $n$-polytope whose $(n-1)$-faces are $(n-1)$-simplices.
If ${\mathcal P}$ is an abstract $n$-polytope, then the associated {\it flag complex} ${\mathfrak F}({\mathcal P})$
is the simplicial complex whose vertices are the faces of ${\mathcal P}$ and whose facets are the flags.
This is a thin simplicial complex of rank $n$ and (P2) guarantees that ${\mathfrak F}({\mathcal P})$ is a chamber complex.
}\end{exmp}

A simplicial complex is $k$-{\it neighborly} if any $k$ distinct vertices form a face.
Then $\alpha_n$ can be characterized as the unique $n$-neighborly 
thin chamber complex of rank $n$. 
We will use the following fact which follows from a more general result
\cite[p.123]{ConvexPol}.

\begin{fact}\label{fact-neighborly}
If a thin chamber complex of rank $n$ is $k$-neighborly and $k>\lfloor n/2\rfloor$, then it is the $n$-simplex $\alpha_n$.
\end{fact}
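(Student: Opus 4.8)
The plan is to reduce the statement, by an induction on the rank $n$, to the characterization of $\alpha_n$ as the unique $n$-neighborly thin chamber complex recalled just above; the whole argument is combinatorial except for one borderline case, which is the genuine obstacle. First I record the endpoint in combinatorial form. If $\Delta$ is $n$-neighborly then, since faces have at most $n$ vertices, every $n$-element subset of $V$ is a facet and every $(n-1)$-element subset is a ridge. The facets containing a fixed ridge $R$ are exactly the sets $R\cup\{v\}$ with $v\in V\setminus R$, so thinness forces $|V\setminus R|=2$, i.e. $|V|=n+1$; consequently every proper subset of $V$ is a face and $\Delta=\alpha_n$. Hence it suffices to promote $k$-neighborliness to $n$-neighborliness. (Note $|V|\ge n+1$ always, as any ridge already needs two distinct completions to facets, so the neighborliness hypotheses are never vacuous and we may assume $2\le k\le n$.)

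The engine is the vertex link $\operatorname{lk}(v)=\{X\subseteq V\setminus\{v\}:X\cup\{v\}\in\Delta\}$. Viewing $\Delta$ as the abstract polytope of Example~\ref{exmp2-3}, the complex $\operatorname{lk}(v)$ is the vertex figure of $\Delta$ at $v$, and since sections of abstract polytopes are abstract polytopes (this is where strong connectivity (P2) enters, cf.\ \cite{McM-S}), $\operatorname{lk}(v)$ is a thin chamber complex of rank $n-1$. Thinness may also be seen directly: a ridge $Y$ of the link is a set of size $n-2$ with $Y\cup\{v\}$ a ridge of $\Delta$, which by thinness of $\Delta$ lies in exactly two facets $Y\cup\{v,w_1\}$ and $Y\cup\{v,w_2\}$, so $Y$ lies in exactly the two link--facets $Y\cup\{w_1\}$ and $Y\cup\{w_2\}$. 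Moreover $k$-neighborliness of $\Delta$ descends to $(k-1)$-neighborliness of $\operatorname{lk}(v)$: any $k-1$ vertices of the link, together with $v$, form a $k$-element subset of $V$, hence a face of $\Delta$.

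Now I induct on $n$, the cases $n\le 2$ being immediate (they reduce at once to the $n$-neighborly endpoint, since there $k=n$). Assume $k\le n-1$, so $\operatorname{lk}(v)$ has rank $n-1$ and is $(k-1)$-neighborly. A short check of floor functions shows $k-1>\lfloor(n-1)/2\rfloor$ holds for every admissible $k$ when $n$ is even, and holds when $n$ is odd and $k\ge\lfloor n/2\rfloor+2$; in all of these cases the induction hypothesis gives $\operatorname{lk}(v)=\alpha_{n-1}$, which has exactly $n$ vertices. Thus $v$ is adjacent to exactly $n$ vertices; since $k\ge 2$, two-neighborliness makes $v$ adjacent to every other vertex, so $|V|=n+1$. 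Together with $\operatorname{lk}(v)=\alpha_{n-1}$ this yields that every proper subset of $V$ is a face, i.e.\ $\Delta=\alpha_n$.

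The main obstacle is the single threshold case that the link induction cannot reach: $n$ odd and $k=\lfloor n/2\rfloor+1=(n+1)/2$. Here the vertex links have even rank $n-1$ and are only $\lfloor(n-1)/2\rfloor$-neighborly, i.e.\ neighborly exactly at the borderline. Such borderline-neighborly complexes are the abstract analogues of the cyclic polytopes and need not be simplices, so no link-theoretic argument can close this case and the reduction stalls. This is precisely the point at which the thin-chamber hypotheses alone are insufficient and one must bring in the convex-geometric content behind the cited result \cite[p.123]{ConvexPol}: a Radon partition of the vertices, available from a polytopal realization, excludes the borderline configurations and settles the remaining case. Isolating this one case is thus the heart of the matter; everything else is the combinatorial reduction above.
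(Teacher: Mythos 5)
There is a genuine gap --- in fact two. First, for context: the paper does not prove Fact~\ref{fact-neighborly} at all; it is quoted as a consequence of a more general result in Gr\"unbaum's \emph{Convex Polytopes} \cite[p.123]{ConvexPol}. Your proposal is therefore an attempt at a self-contained argument, and its inductive engine already fails at the first step: the claim that $\operatorname{lk}(v)$ is again a thin \emph{chamber} complex. Thinness and purity are fine, but connectedness of the facet graph of the link does not follow from connectedness of $\Gamma_{n-1}(\Delta)$, and the appeal to abstract polytopes is invalid: a thin chamber complex in the paper's (Tits') sense need not satisfy the strong connectivity axiom (P2) of Example~\ref{exmp2-3}. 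For instance, identify two antipodal (hence non-adjacent, with disjoint links) vertices of the icosahedron; the quotient is still a thin chamber complex of rank $3$, but the link of the identified vertex is two disjoint $5$-cycles. So links can fail to be chamber complexes, and if you want link-connectivity you must extract it from the neighborliness hypothesis, which you do not do.

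The second gap is fatal. You concede that the borderline case ($n$ odd, $k=(n+1)/2$) is out of reach and propose to settle it by a Radon partition ``available from a polytopal realization''. But a thin chamber complex has no polytopal (or even spherical) realization in general --- that is precisely why the Fact is stated for complexes rather than polytopes --- so this step is unavailable; and no other argument from the stated hypotheses can close it, because in the borderline case the statement is actually \emph{false} at this level of generality: Cs\'asz\'ar's $7$-vertex triangulation of the torus is a $2$-neighborly thin chamber complex of rank $3$ (its $1$-skeleton is $K_7$, every edge lies in exactly two triangles, the dual graph is connected) with $k=2=(3+1)/2$, and it is not $\alpha_3$; likewise K\"uhnel's $9$-vertex $3$-neighborly triangulation of $\mathbb{CP}^2$ at rank $5$. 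Closing the borderline case needs topological input beyond thinness (sphericity or an Euler relation, where Radon or van Kampen--Flores type arguments apply), which is exactly the content hidden in the citation \cite[p.123]{ConvexPol} --- and which the Fact as printed silently omits. Moreover, the borderline case is not an isolated leftover that can be quarantined: for $n$ even and the minimal admissible $k=n/2+1$, the link you pass to has odd rank $n-1$ and is $(n/2)$-neighborly, i.e.\ exactly borderline, so the induction hypothesis you invoke there \emph{is} the unproven case one rank down; since the difference $n-k$ is preserved by passing to links, every instance with $k<n$ descends to a borderline instance at rank $2(n-k)+1$. Unconditionally, your argument proves only the endpoint case $k=n$.
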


\begin{exmp}\label{exmp2-4}{\rm
The join $\alpha_{n}*\alpha_{m}$ with $n\le m$ is a thin chamber complex of rank $n+m$
which is $n$-neighborly and not $(n+1)$-neighborly.
}\end{exmp}

\subsection{Coxeter complexes}
Let $W$ be a finite group generated by a set $S$
whose elements  are involutions and denoted by $s_{1},\dots,s_{n}$.
For any distinct $i,j\in [n]$ we write $m_{ij}$ for the order of the element $s_{i}s_{j}$.
Then $m_{ij}=m_{ji}\ge 2$ and
the condition $m_{ij}=2$ is equivalent to the fact that $s_{i}$ and $s_{j}$ commute.
We suppose that $(W,S)$ is a {\it Coxeter system}, 
i.e $W$ is the quotient  of the free group over $S$
by the normal  subgroup generated by all elements of type $(s_{i}s_{j})^{m_{ij}}$.
The associated {\it diagram}  ${\rm D}(W,S)$ is the graph whose vertex set is $S$
and $s_{i}$ is connected with $s_{j}$ by an edge of order $m_{ij}-2$
(the vertices are disjoint if $m_{ij}=2$).
All finite Coxeter systems are known \cite{Hum}, in particular, every finite irreducible 
Coxeter system is one of the following:
$$\textsf{A}_{n},\;\textsf{B}_{n}=\textsf{C}_{n},\;\textsf{D}_{n},\;
\textsf{F}_{4},\; \textsf{E}_{i},i=6,7,8,\;\textsf{H}_{i},i=3,4,\;
\textsf{I}_{2}(m).$$

For every subset $I=\{i_{1},\dots,i_{k}\}\subset [n]$ we denote by
$W^{I}$ the subgroup generated by the set 
$$S\setminus\{s_{i_{1}},\dots,s_{i_{k}}\}.$$
In particular, for every $i\in [n]$ the subgroup $W^{i}$ is generated
by $S\setminus\{s_{i}\}$.
The following properties are well-known (see, for example, \cite[Section 2.4]{BB}):
\begin{enumerate}
\item[(C1)] $W^{I}\cap W^{J}=W^{I\cup J}$ for any subsets $I,J\subset [n]$,
\item[(C2)] if $v,w\in W$ and $I,J\subset [n]$ then
we have $wW^{I}=vW^{J}$ only in the case when $I=J$ and $w^{-1}v\in W^{I}$.
\end{enumerate}
Also, we will use the following obvious equality 
\begin{equation}\label{eq2-1}
s_{i}W^{j}=W^{j}\;\mbox{ if }\;i\ne j.
\end{equation}

The {\it Coxeter complex} $\Sigma(W,S)$ is the simplicial complex
whose vertices are subsets of type $wW^{i}$ with $w\in W$ and $i\in [n]$.
The vertices $X_{1},\dots,X_{k}$ form a face if 
there exists $w\in W$ such that
$$X_{1}=wW^{i_{1}},\dots,X_{k}=wW^{i_{k}}.$$
This face can be identified with the set 
$$X_{1}\cap\dots\cap X_{k}=wW^{I},\;\mbox{ where }\; 
I=\{i_{1},\dots,i_{k}\}.$$
Every facet is of type 
$\{wW^{1},\dots,wW^{n}\}$
and identified with the element $w$.
So, there is a natural one-to-one correspondence between
facets of $\Sigma(W,S)$ and elements of the group $W$.

\begin{exmp}\label{exmp2-5}{\rm
The Coxeter complexes for
$\textsf{A}_{n}$, $\textsf{B}_{n}=\textsf{C}_{n}$, $\textsf{F}_{4}$ and $\textsf{H}_{i}$, $i=3,4$
are the flag complexes of $\alpha_{n}$, $\beta_{n}$, the $24$-cell,
the icosahedron and the $600$-cell, respectively. 
Note that the $24$-cell is not a simplicial complex.
}\end{exmp}

For every $w\in W$ the left multiplication $L_{w}$ sending $vW^{i}$ to $wvW^{i}$
is an automorphism of the complex $\Sigma(W,S)$.
Also, automorphisms of the diagram ${\rm D}(W,S)$ (if they exist) 
induce automorphisms of $\Sigma(W,S)$.
The automorphism group of $\Sigma(W,S)$ is generated by 
the left multiplications and the automorphisms induced by automorphisms of the diagram
(this statement easily follows from \cite[Corollary 3.2.6]{BB}).

The graph $\Gamma_{n-1}(\Sigma(W,S))$ coincides with the {\it Cayley graph} ${\rm C}(W,S)$ 
whose vertex set is $W$ and elements $w,v\in W$ are adjacent vertices if $v=ws_{i}$ for  a certain $s_{i}\in S$.
Indeed, maximal faces
$\{wW^{1},\dots,wW^{n}\}$ and $\{vW^{1},\dots,vW^{n}\}$
are different precisely in one vertex  if and only if there is a unique 
$i\in [n]$ such that $wW^{i}\ne vW^{i}$, in other words,
$$w^{-1}v\in \bigcap_{j\ne i} W^{j}=\langle s_{i}\rangle$$
and we get the required equality.

\begin{exmp}\label{exmp2-6}{\rm
For the dihedral Coxeter system $\textsf{I}_{2}(m)$ the Cayley graph is the $(2m)$-cycle.
The Cayley graph of $\textsf{A}_{n}$ is the $1$-skeleton of the {\it permutohedron} \cite{Z};
in the general case, the corresponding polytope is called a $W$-{\it permutohedron} \cite{Hohlweg}.
See \cite[Figures 3.3]{BB} for the Cayley graph of $\textsf{H}_{3}$.
}\end{exmp}
 
The {\it length} $l(w)$ of an element $w\in W$
is the smallest length of an expression for $w$ consisting of elements from $S$.
Such an expression is called {\it reduced} if its length is equal to $l(w)$.
Note that elements in a reduced expression are not necessarily mutually distinct.
The path distance between $v,w\in W$ in the Cayley graph ${\rm C}(W,S)$
is equal to $l(v^{-1}w)=l(w^{-1}v)$.

\section{Zigzags in thin simplicial complexes}
In this section, we will always suppose that $\Delta$ is a thin simplicial complex of rank $n$ over a finite set $V$.
Sometimes, $\Delta$ is assumed to be a thin chamber complex.

\subsection{Flags}
Every flag of $\Delta$ can be obtained from a certain sequence of $n$ vertices which form a facet.
Indeed, if  $x_{0},x_{1},\dots,x_{n-1}$ is such a sequence, then the corresponding flag is 
\begin{equation}\label{eq3-1}
\{x_{0}\}\subset\{x_{0},x_{1}\}\subset\dots\subset\{x_{0},x_{1},\dots,x_{n-1}\}.
\end{equation}
Any reenumeration of these vertices gives another flag which contains 
the facet consisting of the vertices.

If $F$ is the flag corresponding to a vertex sequence $x_{0},x_{1},\dots,x_{n-1}$,
then the flag $R(F)$ obtained from the reverse sequence $x_{n-1},\dots, x_{1},x_{0}$
is called the {\it reverse} of $F$, in other words,
the reverse of \eqref{eq3-1}
is the flag
$$\{x_{n-1}\}\subset\{x_{n-1},x_{n-2}\}\subset\dots\subset
\{x_{n-1},\dots,x_{1}\}\subset\{x_{n-1},\dots,x_{1},x_{0}\}.$$
This definition is equivalent to \cite[Definition 8.2]{D-DS}.

Now, let $F$ be the flag formed by $X_{0}, \dots, X_{n-1}$,
where every $X_{i}$ is an $i$-face.
Then for every $i\in \{0,\dots,n-1\}$ there is the unique $i$-face $X'_{i}$
adjacent to $X_{i}$ and incident to all other $X_{j}$.
We denote by $\sigma_{i}(F)$ the flag obtained from $F$
by replacing  $X_{i}$ on $X'_{i}$.
For every flag $F$ we define 
$$T(F):=\sigma_{n-1}\dots\sigma_{1}\sigma_{0}(F).$$
Suppose that $F$ is the flag obtained from a vertex sequence $x_{0},x_{1},\dots,x_{n-1}$.
There is the unique vertex $x_{n}\ne x_{0}$
such that $x_{1},\dots,x_{n}$ form a facet.
An easy verification shows that $T(F)$ is the flag
corresponding to the sequence $x_{1},\dots,x_{n}$.

\begin{lemma}\label{lemma3-1}
For every flag $F$ we have 
$TRT(F)=R(F)$.
\end{lemma}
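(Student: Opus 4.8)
The plan is to carry out the entire computation at the level of vertex sequences, using the explicit description of $T$ recorded just above the statement: if a flag corresponds to the sequence $x_{0},x_{1},\dots,x_{n-1}$, then $T$ sends it to the flag corresponding to $x_{1},\dots,x_{n-1},x_{n}$, where $x_{n}\neq x_{0}$ is the unique vertex for which $\{x_{1},\dots,x_{n}\}$ is a facet. I would combine this with the definition of the reverse $R$, which merely reverses the representing vertex sequence. Since every operator involved is described purely combinatorially on sequences, the identity should reduce to tracking a left shift, a reversal, and another left shift.

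Concretely, I would fix a vertex sequence $x_{0},\dots,x_{n-1}$ representing $F$ and apply the three operators in succession. Applying $T$ yields the sequence $x_{1},\dots,x_{n-1},x_{n}$. Applying $R$ reverses it to $x_{n},x_{n-1},\dots,x_{1}$. Applying $T$ once more then drops the leading entry $x_{n}$ and appends the unique vertex $y\neq x_{n}$ for which $\{x_{n-1},\dots,x_{1},y\}$ is a facet; so $TRT(F)$ corresponds to $x_{n-1},\dots,x_{1},y$.

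The only substantive point is to identify $y$, and this is where thinness enters. The set $\{x_{1},\dots,x_{n-1}\}$ is an $(n-2)$-face, i.e.\ a ridge, so it is contained in exactly two facets. One of them is $\{x_{1},\dots,x_{n-1},x_{n}\}$ by the defining property of $x_{n}$, and the other is the original facet $\{x_{0},x_{1},\dots,x_{n-1}\}$ of $F$. Since the appended vertex must differ from $x_{n}$, it is forced that $y=x_{0}$. Hence $TRT(F)$ corresponds to $x_{n-1},\dots,x_{1},x_{0}$, which is precisely the reverse of the sequence defining $F$, that is, the sequence defining $R(F)$. Therefore $TRT(F)=R(F)$. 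I do not anticipate any genuine obstacle: the argument is essentially bookkeeping on sequences, with the single nontrivial step being the appeal to thinness to pin the last appended vertex down as $x_{0}$.
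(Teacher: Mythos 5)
Your proof is correct and follows essentially the same route as the paper's: track the representing vertex sequence through $T$, $R$, $T$ and observe that the result is the reversed sequence of $F$. The only difference is that you spell out the thinness argument forcing the final appended vertex to be $x_{0}$, a step the paper leaves implicit.
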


\begin{proof}
 As above, we suppose that 
$x_{0},x_{1},\dots,x_{n-1}$ and $x_{1},\dots,x_{n}$ are the sequences 
corresponding to the flags $F$ and $T(F)$, respectively.
Then the sequence $x_{n},\dots,x_{1}$ corresponds to the flag $RT(F)$.
This implies that $TRT(F)$ is defined by the sequence $x_{n-1},\dots,x_{1},x_{0}$
and we get the claim.
\end{proof}

Using similar arguments we can prove the following.

\begin{lemma}\label{lemma3-2}
If $A$ is an automorphism of $\Delta$,
then for every flag $F$
we have $AT(F)=TA(F)$.
\end{lemma}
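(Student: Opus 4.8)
The plan is to mimic the proof of Lemma \ref{lemma3-1} by working entirely in terms of the vertex-sequence description of flags established just above. Recall that a flag $F$ is recorded by a sequence $x_{0},x_{1},\dots,x_{n-1}$ of vertices forming a facet, and that the preceding discussion identifies $T(F)$ with the flag obtained from the sequence $x_{1},\dots,x_{n}$, where $x_{n}\ne x_{0}$ is the \emph{unique} vertex such that $x_{1},\dots,x_{n}$ form a facet. The crucial structural fact I would exploit is that an automorphism $A$ acts on flags simply by relabeling the underlying vertex sequence: since $A$ preserves the face relation, it carries the flag coming from $x_{0},\dots,x_{n-1}$ to the flag coming from $A(x_{0}),\dots,A(x_{n-1})$.

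First I would apply $A$ to $T(F)$. By the remark above, $T(F)$ corresponds to $x_{1},\dots,x_{n}$, so $AT(F)$ corresponds to the sequence $A(x_{1}),\dots,A(x_{n})$. Next I would compute the other side. Applying $A$ to $F$ gives the flag of $A(x_{0}),\dots,A(x_{n-1})$, and then $T$ sends this to the flag of $A(x_{1}),\dots,A(x_{n-1}),y$, where $y\ne A(x_{0})$ is the unique vertex completing $A(x_{1}),\dots,A(x_{n-1})$ to a facet. It then remains to identify $y$ with $A(x_{n})$.

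The heart of the argument, and the only point where thinness enters, is this uniqueness step. Because $x_{1},\dots,x_{n}$ form a facet and $A$ preserves facets, the vertices $A(x_{1}),\dots,A(x_{n})$ also form a facet; moreover $A(x_{n})\ne A(x_{0})$ since $A$ is injective and $x_{n}\ne x_{0}$. Thus $A(x_{n})$ satisfies exactly the defining property of $y$, and the uniqueness guaranteed by thinness forces $y=A(x_{n})$. Consequently $TA(F)$ is the flag of $A(x_{1}),\dots,A(x_{n})$, which coincides with $AT(F)$, giving the claim.

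I do not expect a serious obstacle here; the proof is essentially bookkeeping. The one subtlety to state cleanly is the compatibility of $A$ with the uniqueness built into the definition of $T$, namely that an automorphism sends the distinguished completing vertex of one facet to the distinguished completing vertex of its image. Once this is phrased precisely, both sides reduce to the same vertex sequence and the equality $AT(F)=TA(F)$ follows immediately.
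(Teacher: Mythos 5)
Your proof is correct and follows the same route the paper intends: the paper proves Lemma \ref{lemma3-2} by the phrase ``using similar arguments'' referring to the vertex-sequence computation of Lemma \ref{lemma3-1}, which is exactly what you carry out, with the uniqueness-under-thinness step (forcing $y=A(x_{n})$) made explicit.
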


\subsection{Zigzags and their shadows}
For every flag $F$ the sequence 
$$Z=\{T^{i}(F)\}_{i\in{\mathbb N}}$$
(we assume that $0$ belongs to ${\mathbb N}$)
is called a {\it zigzag}.
Since our simplicial complex is finite,
we have $T^{l}(F)=F$ for some $l>0$.
The smallest number $l>0$ satisfying this condition
is said to be the {\it length} of the zigzag.
For any number $i$ the sequence 
$$T^{i}(F),T^{i+1}(F),\dots$$
is also a zigzag; it is obtained from $Z$ by a cyclic permutation of the flags. 
All such zigzags will be identified with $Z$.

The $k$-{\it shadow} of a zigzag $\{F_{i}\}_{i\in {\mathbb N}}$ 
is the sequence $\{X_{i}\}_{i\in {\mathbb N}}$, 
where every $X_{i}$ is the $k$-face from the flag $F_{i}$. 

\begin{prop}\label{prop3-1}
Every zigzag can be uniquely reconstructed from any of the shadows.
\end{prop}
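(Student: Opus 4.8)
The plan is to use the vertex-sequence description of $T$ established just before the statement. If a flag $F$ comes from a vertex sequence $x_{0},x_{1},\dots,x_{n-1}$ as in \eqref{eq3-1}, then $T(F)$ comes from $x_{1},\dots,x_{n}$, where $x_{n}$ is the unique vertex distinct from $x_{0}$ for which $x_{1},\dots,x_{n}$ is a facet. By induction, every flag $T^{i}(F)$ of the zigzag $Z=\{T^{i}(F)\}$ corresponds to the window $x_{i},x_{i+1},\dots,x_{i+n-1}$ of a single periodic vertex sequence $x_{0},x_{1},x_{2},\dots$, and its $k$-face is $\{x_{i},x_{i+1},\dots,x_{i+k}\}$. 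Thus the $k$-shadow is $\{X_{i}\}$ with $X_{i}=\{x_{i},\dots,x_{i+k}\}$, and reconstructing $Z$ reduces to recovering the vertex sequence $(x_{i})$ from this shadow, since each flag $T^{i}(F)$ is then determined by its window.

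First I would show that two consecutive shadow faces recover a vertex via $X_{i}\setminus X_{i+1}=\{x_{i}\}$. Since $X_{i+1}=\{x_{i+1},\dots,x_{i+k+1}\}$, I need only verify $x_{i}\notin X_{i+1}$. The inequalities $x_{i}\ne x_{i+1},\dots,x_{i}\ne x_{i+k}$ are immediate, as these vertices all lie in the single $k$-face $X_{i}$.

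The only nonroutine step is the inequality $x_{i}\ne x_{i+k+1}$, that is, the vertex leaving the window differs from the vertex entering it. I would split into two cases. If $k\le n-2$, then $x_{i},\dots,x_{i+k+1}$ are $k+2\le n$ vertices all contained in the facet $\{x_{i},\dots,x_{i+n-1}\}$ of $T^{i}(F)$, hence pairwise distinct. If $k=n-1$, then $x_{i+k+1}=x_{i+n}$, which by the definition of $T$ applied to $T^{i}(F)$ is precisely the unique vertex \emph{distinct} from $x_{i}$ completing $x_{i+1},\dots,x_{i+n-1}$ to a facet, so $x_{i+n}\ne x_{i}$ by construction. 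In either case $X_{i}\setminus X_{i+1}=\{x_{i}\}$.

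Finally I would conclude by applying this to every index: $x_{i}=X_{i}\setminus X_{i+1}$ recovers the whole periodic sequence $(x_{i})$ from the $k$-shadow, and since $T^{i}(F)$ is the flag whose $j$-face is $\{x_{i},\dots,x_{i+j}\}$ for each $j$, all flags of $Z$ are determined. The reconstruction is unique because every step is forced, and the argument holds for each $k\in\{0,\dots,n-1\}$, so the zigzag is reconstructible from \emph{any} of its shadows. I expect the case distinction $x_{i}\ne x_{i+k+1}$ to be the main (indeed only) obstacle, as it is exactly where thinness and the defining property of $T$ enter.
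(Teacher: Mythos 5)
Your proof is correct, and you put your finger on the right crux: the only nontrivial point is that $x_{i}\notin X_{i+1}$, i.e.\ $x_{i}\ne x_{i+k+1}$, and your case split ($k\le n-2$: all of $x_{i},\dots,x_{i+k+1}$ lie in the single facet of $T^{i}(F)$; $k=n-1$: property (Z2), the defining clause of $T$) is exactly where thinness enters. However, your route is mechanically different from the paper's. The paper never passes to the vertex sequence: it observes that consecutive members of the $k$-shadow determine the \emph{adjacent} shadows directly --- the $(k-1)$-shadow is the sequence $X_{l-1}\cap X_{0},\,X_{0}\cap X_{1},\,X_{1}\cap X_{2},\dots$ and the $(k+1)$-shadow consists of the unions $X_{i}\cup X_{i+1}$ --- and then iterates these two operations to recover all shadows, hence each flag $F_{i}$ as the collection of its faces of all ranks. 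You instead jump straight to rank $0$ via the set difference $X_{i}\setminus X_{i+1}=\{x_{i}\}$ and rebuild every flag as a window of the recovered vertex sequence, using the explicit description of $T$ from Subsection 3.1. The paper's approach buys a rank-symmetric reconstruction with one set operation per step and no appeal to the $0$-shadow machinery; yours buys completeness of verification, since the distinctness facts you prove (consequences of (Z1) and (Z2)) are precisely what the paper's two ``observations'' silently rely on --- they would fail without them --- so your write-up makes explicit what the paper leaves as easy verification.
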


\begin{proof}
Let  $Z=\{F_{i}\}_{i\in {\mathbb N}}$  be a zigzag of length $l$ and 
let $\{X_{i}\}_{i\in {\mathbb N}}$ be the $k$-shadow of $Z$.
The required statement is a consequence of the following two observations.
If $k>0$ then the $(k-1)$-shadow is the sequence
$$X_{l-1}\cap X_{0},\,X_{0}\cap X_{1},\,X_{1}\cap X_{2},\dots.$$
Similarly, if $k< n-1$ then the $(k+1)$-shadow consists of all $X_{i}\cup X_{i+1}$.
\end{proof}

\begin{prop}\label{prop3-2}
If a sequence $Z=\{F_{1},\dots, F_{l}\}$ is a zigzag of length $l$,
then the same holds for the sequence 
$$R(Z)=\{R(F_{l}),R(F_{l-1}),\dots,R(F_{1})\}.$$
\end{prop}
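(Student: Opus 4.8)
The plan is to show that the reversed sequence is again a single $T$-orbit, and this reduces almost entirely to Lemma~\ref{lemma3-1}. First I would record that the reversal map $R$ is an involution on flags: a flag determines its vertex sequence $x_{0},\dots,x_{n-1}$ uniquely, and reversing this sequence twice returns the original sequence, so $R^{2}=\mathrm{id}$ and in particular $R$ is a bijection on the set of flags of $\Delta$.

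The key step is to translate the forward relation $F_{i+1}=T(F_{i})$ (with indices read cyclically, $F_{l+1}=F_{1}$) into a relation on the reversed flags. Applying Lemma~\ref{lemma3-1} to $F=F_{i}$ gives $TRT(F_{i})=R(F_{i})$, and since $T(F_{i})=F_{i+1}$ this reads $T\bigl(R(F_{i+1})\bigr)=R(F_{i})$ for every $i$. Thus, starting from $R(F_{l})$ and applying $T$ repeatedly produces $R(F_{l-1}),R(F_{l-2}),\dots,R(F_{1})$, and a further application of $T$ sends $R(F_{1})$ back to $R(F_{l})$ (the case $i=l$, using $F_{l+1}=F_{1}$). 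Hence $R(Z)$ is exactly the $T$-orbit of $R(F_{l})$, i.e. a zigzag.

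It remains to check that the length of $R(Z)$ equals $l$. Because $R$ is injective and the flags $F_{1},\dots,F_{l}$ are pairwise distinct (this is precisely what it means for $l$ to be the length of $Z$), the flags $R(F_{1}),\dots,R(F_{l})$ are also pairwise distinct; since the computation above shows that $T^{l}$ fixes $R(F_{l})$, the orbit contains exactly $l$ distinct flags and its length is therefore $l$.

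I do not expect a genuine obstacle here: the entire content is the observation that Lemma~\ref{lemma3-1} is precisely the ``single reversed step'' identity $T\bigl(R(F_{i+1})\bigr)=R(F_{i})$. The only point requiring a little care is the cyclic bookkeeping of the indices together with the injectivity argument that pins down the length exactly; everything else is a direct substitution.
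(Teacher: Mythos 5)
Your proof is correct and follows essentially the same route as the paper: both arguments apply Lemma~\ref{lemma3-1} to each flag of the orbit to obtain the single-step identity $T\bigl(R(F_{i+1})\bigr)=R(F_{i})$ (the paper writes it as $TR(F_{i})=R(F_{i-1})$, handling $i=1$ via $F_{1}=T^{l}(F_{1})$), which immediately exhibits $R(Z)$ as a $T$-orbit. Your additional observation that injectivity of $R$ forces the length to be exactly $l$ is a point the paper leaves implicit, but it is a minor refinement rather than a different approach.
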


\begin{proof}
If $Z=\{F_{1},\dots, F_{l}\}$  is a zigzag, then 
for every $i\in [l]$ we have $F_{i}=T^{i-1}(F)$, where $F=F_{1}$.
By Lemma \ref{lemma3-1}, 
$$TR(F_{i})=TRT^{i-1}(F)=RT^{i-2}(F)=R(F_{i-1})$$
if $i\ge 2$ and 
$$TR(F_{1})=TRT^{l}(F)=RT^{l-1}(F)=R(F_{l}).$$
This means that $R(Z)$ is a zigzag of length $l$.
\end{proof}

Following \cite[Definition 8.2]{D-DS}, we say 
that the zigzag $R(Z)$ is the {\it reverse} of $Z$.

\begin{rem}\label{rem-shadow}{\rm
Let $Z=\{F_{1},\dots,F_{l}\}$ be a zigzag.
Denote by $F^{k}_{i}$ the $k$-face belonging to the flag $F_{i}$. 
For every $k$ the sequence $F^{k}_{1},\dots,F^{k}_{l}$ 
is the $k$-shadow of $Z$.
The $(n-1)$-shadow of the reverse zigzag is 
$$F^{n-1}_{l},F^{n-1}_{l-1},\dots,F^{n-1}_{1}.$$
Then, by Proposition \ref{prop3-1},
the $(n-2)$-shadow of the reverse zigzag is
$$F^{n-2}_{1},F^{n-2}_{l},F^{n-2}_{l-1},\dots,F^{n-2}_{2}.$$
Step by step, we establish that 
$$F^{n-i-1}_{i},F^{n-i-1}_{i-1},\dots,F^{n-i-1}_{1},F^{n-i-1}_{l},\dots,F^{n-i-1}_{i+1}$$
is the $(n-i-1)$-shadow of the reverse zigzag for every $i$ satisfying $1\le i\le n-1$.
In particular, if $x_{1},\dots,x_{l}$ is the $0$-shadow of $Z$,
then 
$$x_{n-1},x_{n-2},\dots,x_{1},x_{l},x_{l-1},\dots,x_{n}$$
is the $0$-shadow of $R(Z)$.
}\end{rem}

Consider the flag $F$ defined by a vertex sequence $x_{0},x_{1},\dots,x_{n-1}$, i.e. 
$$\{x_{0}\}\subset \{x_{0},x_{1}\}\subset\dots\subset \{x_{0},x_{1},\dots, x_{n-1}\}.$$
There is the unique facet containing $x_{1},\dots,x_{n-1}$
and distinct from the facet of $F$.
In this facet, we take the unique vertex $x_{n}$ 
distinct from $x_{1},\dots,x_{n-1}$. 
It was noted above that $T(F)$ is the flag
$$\{x_{1}\}\subset \{x_{1},x_{2}\}\subset\dots\subset \{x_{1},\dots, x_{n}\}.$$
We apply the same arguments to the latter flag and get a certain vertex $x_{n+1}$.
Recurrently, we construct a sequence of vertices $\{x_{i}\}_{i\in {\mathbb N}}$
such that  $T^{i}(F)$ is the flag
\begin{equation}\label{eq3-2}
 \{x_{i}\}\subset \{x_{i},x_{i+1}\}\subset\dots\subset \{x_{i},\dots,x_{i+n-1}\}.
\end{equation}
The sequence $\{x_{i}\}_{i\in {\mathbb N}}$ 
is the $0$-shadow of the zigzag $\{T^{i}(F)\}_{i\in {\mathbb N}}$.
For every $i\in {\mathbb N}$ the following assertions are fulfilled:
\begin{enumerate}
\item[(Z1)]  $x_{i}, x_{i+1},\dots, x_{i+n-1}$
form a facet,
\item[(Z2)] $x_{i}\ne x_{n+i}$.
\end{enumerate}
If $l$ is the length of the zigzag,
then $l>n$ and $x_{i+l}=x_{i}$ for all $i\in {\mathbb N}$.
The equality $x_{i}=x_{j}$ is possible for some distinct $i,j\in \{0,1,\dots,l-1\}$
only in the case when $|i-j|>n$.
We say that the zigzag $\{T^{i}(F)\}_{i\in {\mathbb N}}$ is {\it simple} if  
$x_{0},x_{1}.\dots,x_{l-1}$ are mutually distinct.
In this case, the $k$-shadow is formed by $l$ mutually distinct $k$-faces for every $k$.
The complex $\Delta$ is said to be $z$-{\it simple}
if every zigzag is simple.

\begin{prop}\label{prop3-3}
If $\{x_{i}\}_{i\in {\mathbb N}}$ is a sequence of vertices
satisfying {\rm (Z1)} and {\rm (Z2)} for every $i$, then all flags of type 
{\rm \eqref{eq3-2}}
form a zigzag and $\{x_{i}\}_{i\in {\mathbb N}}$ is the $0$-shadow of this zigzag.
\end{prop}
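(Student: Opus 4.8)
The plan is to show directly that the flags in \eqref{eq3-2} satisfy $F_{i+1}=T(F_i)$ for every $i$, where $F_i$ denotes the flag associated with the window $x_i,\dots,x_{i+n-1}$. Once this is established, induction gives $F_i=T^i(F_0)$, so the family $\{F_i\}_{i\in{\mathbb N}}$ is precisely the zigzag generated by $F_0$; since the $0$-face of $F_i$ is $\{x_i\}$, this simultaneously exhibits $\{x_i\}_{i\in{\mathbb N}}$ as the $0$-shadow of that zigzag.

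First I would check that each $F_i$ is a genuine flag. By (Z1) the vertices $x_i,\dots,x_{i+n-1}$ form a facet, so in particular they are $n$ mutually distinct vertices; hence the nested chain \eqref{eq3-2} is a well-defined flag whose facet is $\{x_i,\dots,x_{i+n-1}\}$.

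The heart of the argument is to compute $T(F_i)$ using the recursive description of $T$ recalled just before Lemma \ref{lemma3-1}: $T(F_i)$ is the flag obtained from the window starting one step later, whose last vertex is the unique vertex different from $x_i$ that completes the ridge $\{x_{i+1},\dots,x_{i+n-1}\}$ to a facet distinct from the facet of $F_i$. I would identify this vertex as $x_{i+n}$. Indeed, (Z1) applied at index $i+1$ says that $x_{i+1},\dots,x_{i+n}$ form a facet, which contains the ridge $\{x_{i+1},\dots,x_{i+n-1}\}$ together with the extra vertex $x_{i+n}$; as this facet consists of $n$ distinct vertices, $x_{i+n}\notin\{x_{i+1},\dots,x_{i+n-1}\}$. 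Moreover (Z2) applied at index $i$ gives $x_{i+n}\neq x_i$, so the facet $\{x_{i+1},\dots,x_{i+n}\}$ differs from $\{x_i,\dots,x_{i+n-1}\}$. By thinness the ridge $\{x_{i+1},\dots,x_{i+n-1}\}$ lies in exactly two facets, hence these two are the only ones, and $\{x_{i+1},\dots,x_{i+n}\}$ is the required second facet, with completing vertex $x_{i+n}$. Therefore $T(F_i)$ is the flag associated with $x_{i+1},\dots,x_{i+n}$, which is exactly $F_{i+1}$.

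The only point demanding care is the bookkeeping of indices: matching the role of the vertex ``$x_n$'' in the definition of $T$ applied to $F_i$ with the actual entry $x_{i+n}$ of the given sequence, and invoking (Z1) and (Z2) at the correct places — (Z1) at index $i+1$ to produce the second facet through the ridge, and (Z2) at index $i$ to guarantee that this facet is distinct from the facet of $F_i$. No genuine obstacle arises beyond this verification, and the proposition follows by the induction described above.
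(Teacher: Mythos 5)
Your proof is correct and is exactly the verification the paper leaves implicit: its own proof of Proposition~\ref{prop3-3} is just ``Easy verification.'' Your identification of the completing vertex as $x_{i+n}$ via (Z1) at index $i+1$, with (Z2) and thinness guaranteeing this is the unique second facet through the ridge $\{x_{i+1},\dots,x_{i+n-1}\}$, is precisely the intended argument, carried out with the index bookkeeping made explicit.
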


\begin{proof}
Easy verification.
\end{proof}

Suppose that a zigzag $Z$ is the reverse of itself, i.e. $R(Z)$
can be obtained from $Z$ by a cyclic permutation of the flags.
Then the $0$-shadow of $Z$  is a sequence of type 
$$x_{0},x_{1},\dots, x_{n-1}, x_{n},\dots,x_{m},\dots,x_{m},\dots,x_{n},x_{n-1},\dots,x_{1},x_{0},\dots$$
and for a sufficiently large $m$ the distance between two exemplars of $x_{m}$ is not greater than $n$. 
This contradicts (Z1).
So, every zigzag is not the reverse of itself.
In what follows, {\it every zigzag will be identified with its reverse}.

We say that $\Delta$ is $z$-{\it uniform} if all zigzags are of the same length. 

\begin{lemma}\label{lemma-numberzigzags}
If $\Delta$ is $z$-uniform and the length of zigzags is equal to $l$, then 
there are precisely $n!N/2l$ zigzags, where $N$ is the number of facets in $\Delta$.
\end{lemma}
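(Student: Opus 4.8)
The plan is to count flags, group them into orbits of the operator $T$, and then correct for the identification of each zigzag with its reverse. The whole argument is essentially a double-counting/orbit-counting computation, so the substance lies in getting the two normalizing factors ($l$ and $2$) right.

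First I would count the total number of flags. By the discussion opening Subsection 3.1, every flag arises from an ordering $x_{0},x_{1},\dots,x_{n-1}$ of the $n$ vertices of some facet via \eqref{eq3-1}, and distinct orderings of the same facet yield distinct flags. Conversely, a flag determines its facet uniquely as its top face $\{x_{0},\dots,x_{n-1}\}$, so the flags containing a fixed facet are in bijection with the $n!$ orderings of that facet's vertices. Since $\Delta$ has $N$ facets, it has exactly $n!\,N$ flags.

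Next, because $\Delta$ is $z$-uniform with zigzag length $l$, the operator $T$ partitions the set of flags into orbits $\{F,T(F),\dots,T^{l-1}(F)\}$, each of cardinality exactly $l$. Hence the $n!\,N$ flags split into $n!\,N/l$ orbits of $T$, i.e. $n!\,N/l$ zigzags before any identification. Finally I would bring in the reverse operation. By Proposition \ref{prop3-2}, the reverse $R$ carries each zigzag to a zigzag of the same length $l$; moreover $R$ is an involution on the set of flags and so induces an involution on the set of $T$-orbits. Since every zigzag is identified with its reverse, the number of zigzags is the number of classes of this induced involution.

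The decisive point is that this induced involution is \emph{fixed-point free}: no zigzag equals its own reverse. This is exactly the statement established in the paragraph immediately preceding the lemma (via the (Z1)-contradiction argument). Granting it, $R$ pairs the $n!\,N/l$ orbits into $n!\,N/(2l)$ two-element classes, each class being a single zigzag after identification, which gives the claimed count $n!\,N/(2l)$. The main obstacle, therefore, is not the counting but ensuring that the factor $2$ is legitimate, i.e. that $R$ never stabilizes an orbit; this is precisely why the remark that "every zigzag is not the reverse of itself" was proved beforehand, and it is the only nonformal ingredient of the proof.
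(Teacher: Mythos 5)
Your proof is correct and follows essentially the same route as the paper's: count the $n!\,N$ flags, observe that $z$-uniformity splits them into $T$-orbits of size $l$, and halve because each zigzag is identified with its reverse. The only difference is one of explicitness: you spell out that the factor $2$ rests on the reverse-involution on orbits being fixed-point free (the fact proved just before the lemma), which the paper's terse proof uses implicitly.
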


\begin{proof}
By the definition, zigzags are the orbits of the action of the operator $T$ on the set of flags.
Thus, the sum of the lengths of all zigzags is equal to the number of flags.
There are precisely $n!N$ distinct flags. Since every zigzag is identified with its reverse,
we get precisely $n!N/2l$ zigzags.
\end{proof}

\begin{exmp}\label{exmp3-1}{\rm
There is a natural one-to-one correspondence between
zigzags of $\alpha_{n}$ and permutation on the set $[n+1]$.
Every zigzag is of length $n+1$ and Lemma \ref{lemma-numberzigzags} implies that
the number of zigzag is equal to $\frac{n!}{2}$.
}\end{exmp}

There is the following characterizations of the $n$-simplex in terms of the length of zigzags.

\begin{prop}\label{prop3-4}
Suppose that $\Delta$ is a thin chamber complex.
Then it contains a zigzag of length $n+1$ if and only if it is the $n$-simplex $\alpha_{n}$.
\end{prop}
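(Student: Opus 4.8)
The plan is to prove both implications, the reverse one being immediate and the forward one carrying all the content. For the reverse implication, I would simply invoke Example~\ref{exmp3-1}: every zigzag of $\alpha_{n}$ has length $n+1$, so $\alpha_{n}$ certainly contains a zigzag of the required length.

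For the forward implication, suppose $\Delta$ contains a zigzag $Z$ of length $l=n+1$, and let $\{x_{i}\}_{i\in{\mathbb N}}$ be its $0$-shadow, so that $x_{i+n+1}=x_{i}$ and conditions (Z1), (Z2) hold for every $i$. The first step is to observe that the $n+1$ vertices $x_{0},\dots,x_{n}$ are mutually distinct. Indeed, as noted just before the proposition, an equality $x_{i}=x_{j}$ with distinct $i,j\in\{0,\dots,l-1\}$ would force $|i-j|>n$; but once $l=n+1$ the indices lie in $\{0,\dots,n\}$, where $|i-j|\le n$, so no such coincidence can occur. Thus $Z$ is simple and involves exactly the $n+1$ distinct vertices $x_{0},\dots,x_{n}$.

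Next I would read off the facets from (Z1). Each block $x_{i},x_{i+1},\dots,x_{i+n-1}$ of $n$ cyclically consecutive vertices is a facet, and since $x_{i+n}=x_{i-1}$ (using the period $n+1$), this facet is precisely $\{x_{0},\dots,x_{n}\}\setminus\{x_{i-1}\}$. Letting $i$ run over $0,\dots,n$ produces, as facets of $\Delta$, all $n+1$ subsets of $\{x_{0},\dots,x_{n}\}$ of cardinality $n$. The subcomplex $\Delta'$ generated by them is therefore the $n$-simplex on the vertex set $\{x_{0},\dots,x_{n}\}$. It remains to show $\Delta=\Delta'$, and this is where the hypotheses on $\Delta$ enter. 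Every ridge of $\Delta'$ is an $(n-1)$-subset $R$ of the $(n+1)$-element set $\{x_{0},\dots,x_{n}\}$, hence is contained in exactly two facets of $\Delta'$, obtained by adjoining to $R$ one of the two vertices outside it. Because $\Delta$ is thin, $R$ lies in exactly two facets of $\Delta$; these must then coincide with the two facets already furnished by $\Delta'$. Consequently the facet of $\Delta$ adjacent to any facet of $\Delta'$ across any ridge again belongs to $\Delta'$. Since $\Delta$ is a chamber complex, $\Gamma_{n-1}(\Delta)$ is connected, so following a path in this graph from one facet of $\Delta'$ I reach every facet of $\Delta$ without ever leaving $\Delta'$. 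Hence every facet of $\Delta$ lies in $\Delta'$, giving $\Delta=\Delta'=\alpha_{n}$.

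I expect the main obstacle to be ruling out any extra vertices or facets of $\Delta$ beyond the simplex $\Delta'$ read off from the zigzag. The decisive point is that thinness saturates each ridge of $\Delta'$ with its two facets, both of which are already present in $\Delta'$; chamber-connectedness then propagates this and leaves no room for $\Delta$ to be strictly larger. (Alternatively, one could phrase the last step through Fact~\ref{fact-neighborly}, noting that $\Delta'$ realizes $\Delta$ as $n$-neighborly of rank $n$, but the thinness-plus-connectedness argument seems the most direct.)
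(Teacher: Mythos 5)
Your proposal is correct and follows essentially the same route as the paper's own proof: Example~\ref{exmp3-1} for the easy direction, then distinctness of the $n+1$ vertices in the $0$-shadow, (Z1) yielding all $n$-element subsets as facets, and finally thinness (each ridge already saturated by two such facets) combined with connectedness of $\Gamma_{n-1}(\Delta)$ to exclude any further facets. You merely spell out in more detail the steps the paper compresses into two sentences.
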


\begin{proof}
By Example \ref{exmp3-1}, every zigzag in $\alpha_{n}$ is of length $n+1$. 
Conversely, suppose that $Z$ is a zigzag of length $n+1$. 
It follows from (Z1) and (Z2) that 
the $0$-shadow of $Z$ consists of $n+1$ mutually distinct vertices.
Denote by $X$ the subset of $V$ formed by these $n+1$ vertices.
By (Z1), every $n$-element subset of $X$
is a facet of $\Delta$.
Since every $(n-1)$-element subset of $X$ is the intersection
of two such facets,
the connectedness of $\Gamma_{n-1}(\Delta)$ guarantees that 
there are no other facets in $\Delta$.
\end{proof}

\begin{exmp}\label{exmp3-2}{\rm
The $0$-shadow of every zigzag in $\beta_{n}$ is a sequence of the following type
$$i_{1},\dots,i_{n},-i_{1},\dots,-i_{n},$$ where $i_{1},\dots,i_{n}$ form a facet.
Thus, all zigzags are of length $2n$. 
Since $\beta_{n}$ has precisely $2^{n}$ facets, 
Lemma \ref{lemma-numberzigzags} shows that there are precisely $2^{n-2}(n-1)!$ zigzags.
}\end{exmp}

The $0$-shadows of  zigzags considered in Examples \ref{exmp3-1} and \ref{exmp3-2} 
consist of all vertices of the complex. For the general case this fails.

By Lemma \ref{lemma3-2}, every automorphism of $\Delta$ sends zigzags to zigzags. 
We say that $\Delta$ is $z$-{\it transitive} if for any two zigzags there is an automorphism of $\Delta$
transferring one of them to the other.
The complexes $\alpha_{n}$, $\beta_{n}$ are $z$-transitive.
Also, they are $z$-simple.

\subsection{Zigzags in Coxeter complexes}
Let $(W,S)$ be, as in Subsection 2.2, a finite Coxeter system and let
$s_{1},\dots,s_{n}$ be the elements of $S$.
For every permutation $\delta$ on the set $[n]$ we consider 
the corresponding {\it Coxeter element}
$$s_{\delta}:=s_{\delta(1)}\dots s_{\delta(n)}.$$
It is well-known that the order of this element does not depend on $\delta$
\cite[Section 3.16]{Hum}.
This order is denoted by $h$ and called the {\it Coxeter number}.
All Coxeter numbers are known \cite[p.80, Table 2]{Hum}.
Also, we denote by $E_{\delta}$ the flag in $\Sigma(W,S)$ 
obtained from the following sequence of vertices
$$W^{\delta(1)},\dots,W^{\delta(n)}.$$
The facet in this flag is identified with the identity element $e$.
The second facet containing $W^{\delta(2)},\dots, W^{\delta(n)}$
is 
$$\{s_{\delta(1)}W^{\delta(2)}=W^{\delta(2)},\dots,s_{\delta(1)}W^{\delta(n)}=W^{\delta(n)},
s_{\delta(1)}W^{\delta(1)}\}.$$
This facet is identified with $s_{\delta(1)}$.
So, the flag $T(E_{\delta})$ is defined by the sequence
$$W^{\delta(2)},\dots, W^{\delta(n)},s_{\delta(1)}W^{\delta(1)}.$$
The facet containing $W^{\delta(3)},\dots,W^{\delta(n)},s_{\delta(1)}W^{\delta(1)}$
and distinct from $s_{\delta(1)}$ is 
$$\{s_{\delta(1)}s_{\delta(2)}W^{\delta(3)}=W^{\delta(3)},\dots,
s_{\delta(1)}s_{\delta(2)}W^{\delta(1)}=s_{\delta(1)}W^{\delta(1)}, 
s_{\delta(1)}s_{\delta(2)}W^{\delta(2)}\}.$$
This facet is identified with $s_{\delta(1)}s_{\delta(2)}$
and the flag $T^{2}(E_{\delta})$ is related to 
the vertex sequence
$$W^{\delta(3)},\dots, W^{\delta(n)},s_{\delta(1)}W^{\delta(1)},
s_{\delta(1)}s_{\delta(2)}W^{\delta(2)}.$$
Similarly, we show that for every $i\in[n-1]$ the flag $T^{i}(E_{\delta})$
is defined by the sequence
$$W^{\delta(i+1)},\dots,W^{\delta(n)},s_{\delta(1)}W^{\delta(1)},
s_{\delta(1)}s_{\delta(2)}W^{\delta(2)},\dots,
s_{\delta(1)}\dots s_{\delta(i)}W^{\delta(i)}$$
and the flag 
$T^{n}(E_{\delta})$ corresponds to the sequence
$$s_{\delta(1)}W^{\delta(1)},s_{\delta(1)}s_{\delta(2)}W^{\delta(2)},\dots,
s_{\delta(1)}\dots s_{\delta(n)}W^{\delta(n)}.$$
Using \eqref{eq2-1} we rewrite the latter sequence as follows
$$s_{\delta}W^{\delta(1)},\dots,s_{\delta}W^{\delta(n)}.$$
Therefore,
$$T^{n}(E_{\delta})=L_{s_{\delta}}(E_{\delta})$$
(recall that $L_{w}$ is the left multiplication sending every $vW^{i}$ to $wvW^{i}$, see Subsection 2.2).
Since $L_{s_{\delta}}$ is an automorphism of $\Sigma(W,S)$,
Lemma \ref{lemma3-2} implies that
$$T^{n+i}(E_{\delta})=L_{s_{\delta}}T^{i}(E_{\delta})$$
for every $i\in {\mathbb N}$, in particular,
$$T^{mn}(E_{\delta})=L_{s^{m}_{\delta}}(E_{\delta}).$$
Thus the length of the zigzag $\{T^{i}(E_{\delta})\}_{i\in {\mathbb N}}$ 
is equal to $nh$ and the $0$-shadow is 
$$W^{\delta(1)},\dots,W^{\delta(n)},s_{\delta}W^{\delta(1)},\dots,s_{\delta}W^{\delta(n)},
\dots,
s^{h-1}_{\delta}W^{\delta(1)},\dots,s^{h-1}_{\delta}W^{\delta(n)}.$$
The $(n-1)$-shadow of this zigzag is the following 
$$e,s_{\delta(1)},s_{\delta(1)}s_{\delta(2)},\dots, s_{\delta},s_{\delta}s_{\delta(1)},\dots,
s^{2}_{\delta}, s^{2}_{\delta}s_{\delta(1)},\dots,s^{h}_{\delta}=e.$$
For every number $m<h$ the element $s^{m}_{\delta}$ does not belong to any $W^{i}$ (see \cite[Theorem 3.1]{Williams}).
This implies that the zigzag $\{T^{i}(E_{\delta})\}_{i\in {\mathbb N}}$ is simple.

For every flag $F$ in $\Sigma(W,S)$ 
there exist $w\in W$ and a permutation $\delta$ on the set $[n]$
such that $F=L_{w}(E_{\delta})$. The automorphism $L_{w}$
sends the zigzag $\{T^{i}(E_{\delta})\}_{i\in {\mathbb N}}$
to the zigzag $\{T^{i}(F)\}_{i\in {\mathbb N}}$. 
The $0$-shadow of the latter zigzag is
\begin{equation}\label{eq3-3}
wW^{\delta(1)},\dots,wW^{\delta(n)},\dots,
ws^{h-1}_{\delta}W^{\delta(1)},\dots,ws^{h-1}_{\delta}W^{\delta(n)}
\end{equation}
and the $(n-1)$-shadow is
\begin{equation}\label{eq3-4}
w,ws_{\delta(1)},ws_{\delta(1)}s_{\delta(2)},\dots, ws_{\delta},ws_{\delta}s_{\delta(1)},\dots,
ws^{2}_{\delta}, ws^{2}_{\delta}s_{\delta(1)},\dots,ws^{h}_{\delta}=w.
\end{equation}
Since $\Sigma(W,S)$ is $z$-uniform and contains precisely $|W|$ facets, 
we can find the number of zigzags in $\Sigma(W,S)$ using Lemma \ref{lemma-numberzigzags}.

So, we get the following. 

\begin{prop}\label{prop3-5}
The following assertions are fulfilled:
\begin{enumerate}
\item[(1)] the Coxeter complex $\Sigma(W,S)$ is $z$-simple;
\item[(2)] there are precisely $|W|(n-1)!/2h$ distinct zigzags in $\Sigma(W,S)$ and the length of every zigzag is equal to $nh$, 
where $h$ is the Coxeter number corresponding to $(W,S)$ and $n=|S|$;
\item[(3)]
the $0$-shadow and the $(n-1)$-shadow of every zigzag in $\Sigma(W,S)$
are described by the formulas \eqref{eq3-3} and \eqref{eq3-4}, respectively.
\end{enumerate}
\end{prop}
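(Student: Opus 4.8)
The plan is to prove all three assertions for the single ``base'' zigzag $\{T^{i}(E_{\delta})\}_{i\in\mathbb{N}}$ and then transport them to an arbitrary flag by an automorphism. The key reduction is that every flag $F$ of $\Sigma(W,S)$ has the form $F=L_{w}(E_{\delta})$ for some $w\in W$ and some permutation $\delta$ of $[n]$; since Lemma \ref{lemma3-2} gives $L_{w}T=TL_{w}$, the $T$-orbit of $F$ is precisely the image under the automorphism $L_{w}$ of the $T$-orbit of $E_{\delta}$. Length, simplicity, and the shape of the shadows are all invariant under automorphisms, so it suffices to treat $E_{\delta}$.

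The heart of the argument, and what I expect to be the main obstacle, is the $z$-simplicity in (1). From the computation preceding the statement, $T^{n}(E_{\delta})=L_{s_{\delta}}(E_{\delta})$, and Lemma \ref{lemma3-2} then yields $T^{mn+i}(E_{\delta})=L_{s_{\delta}^{m}}T^{i}(E_{\delta})$ for all $m\in\mathbb{N}$ and $0\le i<n$. Reading off the $0$-faces, the $0$-shadow of the base zigzag is the doubly indexed family $s_{\delta}^{m}W^{\delta(i+1)}$. I would show these $nh$ vertices (for $0\le m<h$, $0\le i<n$) are mutually distinct: an equality $s_{\delta}^{m}W^{\delta(i+1)}=s_{\delta}^{m'}W^{\delta(i'+1)}$ forces, by property (C2), first $\delta(i+1)=\delta(i'+1)$, hence $i=i'$, and then $s_{\delta}^{m-m'}\in W^{\delta(i+1)}$. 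Since $|m-m'|<h$, the crux is to exclude this membership, which is exactly the imported fact \cite[Theorem 3.1]{Williams} that no power $s_{\delta}^{k}$ with $0<k<h$ lies in any parabolic subgroup $W^{j}$. Granting it, the base zigzag is simple, and by the reduction of the first paragraph every zigzag is simple, proving (1).

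The length is then a byproduct: the $nh$ distinct $0$-shadow values, together with the periodicity coming from $s_{\delta}^{m+h}=s_{\delta}^{m}$, show that the smallest period is exactly $nh$, so $\Sigma(W,S)$ is $z$-uniform of zigzag length $l=nh$. For (3), the intermediate flags $T^{i}(E_{\delta})$ already displayed give the $0$-shadow and $(n-1)$-shadow of the base zigzag, and applying $L_{w}$ turns them into \eqref{eq3-3} and \eqref{eq3-4}. Finally, for (2) I would simply invoke Lemma \ref{lemma-numberzigzags}: with $N=|W|$ facets and uniform length $l=nh$, the number of zigzags is $n!N/2l=|W|(n-1)!/(2h)$.
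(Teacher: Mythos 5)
Your proposal is correct and follows essentially the same route as the paper: the paper's own argument likewise computes the base zigzag through $E_{\delta}$ to get $T^{n}(E_{\delta})=L_{s_{\delta}}(E_{\delta})$, uses Lemma \ref{lemma3-2} to propagate this to $T^{mn+i}(E_{\delta})=L_{s^{m}_{\delta}}T^{i}(E_{\delta})$, invokes Williams' Theorem 3.1 (that $s^{m}_{\delta}\notin W^{i}$ for $0<m<h$) for simplicity, transports everything to an arbitrary flag via $F=L_{w}(E_{\delta})$, and counts zigzags with Lemma \ref{lemma-numberzigzags}. Your only additions are to spell out the distinctness argument via property (C2) and to justify that the minimal period is exactly $nh$, points the paper leaves implicit.
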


The existence of an automorphism of $\Sigma(W,S)$ 
transferring the flag defined by the sequence $W^{1},\dots,W^{n}$
to the flag associated to the sequence $W^{\delta(1)},\dots,W^{\delta(n)}$
is equivalent to the fact that the permutation $\delta$ induces an automorphism of 
the diagram ${\rm D}(W,S)$. So, $\Sigma(W,S)$ is $z$-transitive only in some special cases
when every permutation on the set $[n]$ induces an automorphism of the diagram.

\subsection{Generalized zigzags}
Let $\delta$ be a permutation on the set $\{0,1,\dots,n-1\}$.
Consider the operator 
$$T_{\delta}:=\sigma_{\delta(n-1)}\dots\sigma_{\delta(1)}\sigma_{\delta(0)}$$
on the set of flags in $\Delta$.
For every flag $F$ the sequence
$\{T^{i}_{\delta}(F)\}_{i\in{\mathbb N}}$
will be called a $\delta$-{\it zigzag}.
Also, we say that this is a {\it generalized zigzag}. 
As in Subsection 3.2, we define the {\it length} and {\it shadows}
of generalized zigzags. 

\begin{rem}{\rm
Let $F$ be a flag in $\Delta$ and let 
$Z=\{F,T(F),\dots,T^{l-1}(F)\}$ be the associated zigzag.
The operator 
$\tilde{T}=\sigma_{0}\sigma_{1}\dots\sigma_{n-1}$ 
coincides with $T^{-1}$ and we have
$$\tilde{T}(F)=T^{l-1}(F),$$
$$\tilde{T}^{2}(F)=T^{2l-2}(F)=T^{l-2}(F),$$
$$\vdots$$
$$\tilde{T}^{l-1}(F)=T^{(l-1)^2}(F)=T(F).$$
Therefore, 
$\{F,\tilde{T}(F),\dots,\tilde{T}^{l-1}(F)\}$ is the zigzag reversed to $Z$.
Similarly, we show that the generalized zigzags defined by the operator 
$\sigma_{\delta(0)}\sigma_{\delta(1)}\dots\sigma_{\delta(n-1)}$ are 
reversed to the generalized zigzags obtained from $T_{\delta}$.
}\end{rem}

Let $F$ be a flag of $\Delta$ whose $k$-face is denoted by $X_{k}$
for every $k\in \{0,1,\dots,n-1\}$. 
This is a facet in the flag complex ${\mathfrak F}(\Delta)$
and we consider the zigzag $Z$ in ${\mathfrak F}(\Delta)$
defined by the vertex sequence 
$$X_{\delta(0)},X_{\delta(1)},\dots,X_{\delta(n-1)},$$
where $\delta$ is a certain permutation on the set $\{0,1,\dots,n-1\}$.
Let $\{Y_{i}\}_{i\in {\mathbb N}}$ be the $0$-shadow of this zigzag.
Then
$$Y_{0}=X_{\delta(0)},Y_{1}=X_{\delta(1)},\dots,Y_{n-1}=X_{\delta(n-1)}$$
and $Y_{n}$ is the $\delta(0)$-face in the flag $\sigma_{\delta(0)}(F)$.
Similarly, $Y_{n+1}$ is the $\delta(1)$-face in the flag $\sigma_{\delta(1)}\sigma_{\delta(0)}(F)$.
Step by step, we establish that $Y_{n+i}$ is 
the $\delta(i)$-face in the flag 
$$
\sigma_{\delta(i)}\dots\sigma_{\delta(1)}\sigma_{\delta(0)}(F)
$$
for every $i\in \{0,1,\dots,n-1\}$.
Therefore, $Y_{n},Y_{n+1},\dots,Y_{2n-1}$ belongs to the flag
$T_{\delta}(F)$. 
The same arguments show that 
$Y_{kn+i}$ is the $\delta(i)$-face in the flag $T^{k}_{\delta}(F)$
for every $k\in {\mathbb N}$ and $i\in \{0,1,\dots,n-1\}$.
In other words, the $0$-shadow of $Z$ is formed by 
the faces from the $\delta$-zigzag $\{T^{i}_{\delta}(F)\}_{i\in {\mathbb N}}$, i.e.
$$\underbrace{Y_{0},Y_{1},\dots,Y_{n-1}}_{F},
\underbrace{Y_{n},\dots,Y_{2n-1}}_{T_{\delta}(F)},
\underbrace{Y_{2n},\dots,Y_{3n-1}}_{T^{2}_{\delta}(F)},\dots\,.$$
Thus the length of $Z$ is equal to $nl$, 
where $l$ is the length of $\{T^{i}_{\delta}(F)\}_{i\in {\mathbb N}}$.
It is trivial that $Z$ is simple if and only if $\{T^{i}_{\delta}(F)\}_{i\in {\mathbb N}}$
is simple. In particular, we have proved the following.

\begin{prop}\label{prop3-6}
There is a natural one-to-one correspondence between 
zigzags in ${\mathfrak F}(\Delta)$ and generalized zigzags in $\Delta$.
The length of a zigzag in ${\mathfrak F}(\Delta)$ is equal to $nl$,
where $l$ is the length of the corresponding generalized zigzag in $\Delta$.
\end{prop}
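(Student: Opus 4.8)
The plan is to pass to $0$-shadows and exhibit the correspondence directly. First I would record that a flag of the flag complex $\mathfrak{F}(\Delta)$ is nothing but an \emph{ordered} flag of $\Delta$: the vertices of $\mathfrak{F}(\Delta)$ are the faces of $\Delta$ and its facets are the flags of $\Delta$, so a flag of $\mathfrak{F}(\Delta)$ arises from a sequence $X_{\delta(0)},\dots,X_{\delta(n-1)}$ of the faces of some flag $F=\{X_{0},\dots,X_{n-1}\}$ of $\Delta$, where $\delta$ is a permutation of $\{0,\dots,n-1\}$. By Proposition \ref{prop3-1} a zigzag of $\mathfrak{F}(\Delta)$ is determined by its $0$-shadow, so it suffices to analyse the $0$-shadow $\{Y_{i}\}_{i\in{\mathbb N}}$ of the zigzag $Z$ issued from the facet $X_{\delta(0)},\dots,X_{\delta(n-1)}$, where each $Y_{i}$ is a face of $\Delta$.

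The heart of the argument is a local computation, and this is where I expect the bookkeeping to be most delicate. By \eqref{eq3-2} every window $Y_{i},Y_{i+1},\dots,Y_{i+n-1}$ is a facet of $\mathfrak{F}(\Delta)$, i.e. a flag of $\Delta$; hence two consecutive windows, which share the $n-1$ faces $Y_{i+1},\dots,Y_{i+n-1}$ and differ only in $Y_{i}$ versus $Y_{i+n}$, carry the same rank multiset $\{0,\dots,n-1\}$, forcing $Y_{i}$ and $Y_{i+n}$ to have equal rank (namely the one missing among the shared faces). Thus passing from one window to the next replaces a single face of a flag of $\Delta$ by the unique adjacent face of the same rank incident to all the others, which is precisely one application of an operator $\sigma$ of $\Delta$. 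Tracking ranks from the initial block, one finds that the step $Y_{i},\dots,Y_{i+n-1}\mapsto Y_{i+1},\dots,Y_{i+n}$ applies $\sigma_{\delta(i\bmod n)}$, so that a full period of $n$ steps applies $\sigma_{\delta(n-1)}\dots\sigma_{\delta(0)}=T_{\delta}$ to the underlying flag of $\Delta$. Consequently the shadow breaks into consecutive blocks of length $n$, the $k$-th block $Y_{kn},\dots,Y_{kn+n-1}$ being exactly the flag $T^{k}_{\delta}(F)$, with $Y_{kn+i}$ its $\delta(i)$-face.

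With the block decomposition in hand the remaining claims follow quickly. Since each flag $T^{k}_{\delta}(F)$ contributes precisely $n$ consecutive faces to the $0$-shadow, the length of $Z$ equals $nl$, where $l$ is the length of the generalized zigzag $\{T^{i}_{\delta}(F)\}_{i\in{\mathbb N}}$; and $Z$ is simple if and only if this generalized zigzag is, because the $Y_{i}$ are mutually distinct exactly when the flags $T^{k}_{\delta}(F)$ are. For the bijection I would define the inverse by reading a single block off the shadow to recover both $F$ and $\delta$. The one point requiring care is that this assignment is compatible with the identifications already in force: reversal of zigzags, handled via the remark preceding the statement, which matches $T_{\delta}$ with $\sigma_{\delta(0)}\dots\sigma_{\delta(n-1)}$; and the cyclic reindexing of $\delta$ induced by shifting the starting flag within $Z$, under which the relevant operators are merely conjugated by a single $\sigma_{\delta(0)}$. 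Checking this compatibility is routine, and it shows that the construction descends to a well-defined map between zigzags in $\mathfrak{F}(\Delta)$ and generalized zigzags in $\Delta$, establishing the stated one-to-one correspondence.
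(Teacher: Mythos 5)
Your proof is correct and takes essentially the same route as the paper: both pass to the $0$-shadow of the zigzag in ${\mathfrak F}(\Delta)$ determined by the ordered flag $X_{\delta(0)},\dots,X_{\delta(n-1)}$, identify the consecutive length-$n$ blocks with the flags $T^{k}_{\delta}(F)$ (the paper by computing each new face step by step, you by the equivalent rank-periodicity bookkeeping on windows), and then read off the length $nl$ and the correspondence, modulo the same routine checks on cyclic shifts and reversal that the paper also leaves implicit. The only imprecision is in your aside on simplicity, which is not part of the statement being proved: distinctness of the flags $T^{k}_{\delta}(F)$ does not by itself force distinctness of all the $Y_{i}$, since distinct flags may share faces; the correct equivalence is between distinctness of the $Y_{i}$ and distinctness, within each rank, of the faces occurring in the flags $T^{k}_{\delta}(F)$.
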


Lemma \ref{lemma-numberzigzags} implies the following.

\begin{cor}
If ${\mathfrak F}(\Delta)$ is $z$-uniform and the length of generalized zigzags in $\Delta$ is equal to $l$, then 
there are precisely $(n-1)!N/2l$ generalized zigzags, where $N$ is the number of flags in $\Delta$.
\end{cor}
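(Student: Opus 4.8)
The plan is to apply Lemma~\ref{lemma-numberzigzags} not to $\Delta$ itself but to the flag complex ${\mathfrak F}(\Delta)$, and then to transport the resulting count back to $\Delta$ through the bijection of Proposition~\ref{prop3-6}. Since that proposition already identifies generalized zigzags in $\Delta$ with ordinary zigzags in ${\mathfrak F}(\Delta)$, it will suffice to count the latter and read off the former.

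First I would record the parameters needed to feed Lemma~\ref{lemma-numberzigzags}. By Example~\ref{exmp2-3}, ${\mathfrak F}(\Delta)$ is a thin chamber complex whose rank equals that of $\Delta$, namely $n$, and whose facets are exactly the flags of $\Delta$; hence ${\mathfrak F}(\Delta)$ has precisely $N$ facets. The hypothesis that ${\mathfrak F}(\Delta)$ is $z$-uniform is exactly what permits Lemma~\ref{lemma-numberzigzags} to apply, and Proposition~\ref{prop3-6} supplies the common length of its zigzags: as each generalized zigzag in $\Delta$ has length $l$, the corresponding zigzag in ${\mathfrak F}(\Delta)$ has length $nl$.

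Then it remains only to carry out the substitution. Lemma~\ref{lemma-numberzigzags}, applied to the rank-$n$ complex ${\mathfrak F}(\Delta)$ with $N$ facets and zigzags of length $nl$, gives
$$\frac{n!\,N}{2\,(nl)}=\frac{(n-1)!\,N}{2l}$$
zigzags in ${\mathfrak F}(\Delta)$. By the one-to-one correspondence of Proposition~\ref{prop3-6}, this is precisely the number of generalized zigzags in $\Delta$, which is the asserted count.

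The argument is essentially bookkeeping, so I do not expect a genuine obstacle; the only point demanding care is the correct reading of the parameters when invoking Lemma~\ref{lemma-numberzigzags} for ${\mathfrak F}(\Delta)$ rather than for $\Delta$. Specifically, one must note that the cancellation $n!/(nl)=(n-1)!/l$ is exactly what converts the factor $n!$ of the lemma (governed by the rank $n$ of ${\mathfrak F}(\Delta)$) into the factor $(n-1)!$ of the statement, and that the role of \emph{number of facets} is here played by the number $N$ of flags of $\Delta$.
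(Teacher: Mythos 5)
Your proof is correct and follows exactly the paper's intended derivation: the paper states the corollary as an immediate consequence of Lemma~\ref{lemma-numberzigzags} applied to the rank-$n$ complex ${\mathfrak F}(\Delta)$, whose facets are the $N$ flags of $\Delta$ and whose zigzags have length $nl$ by Proposition~\ref{prop3-6}, giving $n!N/(2nl)=(n-1)!N/2l$.
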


Clearly, generalized zigzags can be defined in abstract polytopes and Proposition \ref{prop3-6} holds  for this case.
Recall that the flag complexes of $\alpha_{n}$, $\beta_{n}$,
the $24$-cell, the icosahedron and the $600$-cell are 
the Coxeter complexes of $\textsf{A}_{n}$, $\textsf{B}_{n}=\textsf{C}_{n}$,
$\textsf{F}_{4}$ and $\textsf{H}_{i}$, $i=3,4$ (respectively).
Propositions \ref{prop3-5} and \ref{prop3-6} imply the following.

\begin{cor}
The lengths of generalized zigzags in  $\alpha_{n}$, $\beta_{n}$,
the $24$-cell, the icosahedron and the $600$-cell are  equal to 
the corresponding Coxeter numbers
$$h(\textsf{A}_{n})=n+1,\; h(\textsf{B}_{n})=2n,\; 
h(\textsf{F}_{4})=12,\;h(\textsf{H}_{3})=10,\;h(\textsf{H}_{4})=30,$$
respectively.
\end{cor}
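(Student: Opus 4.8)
The plan is to derive the statement as an immediate consequence of Propositions \ref{prop3-5} and \ref{prop3-6}, together with the identification of each polytope's flag complex with a Coxeter complex. First I would invoke Example \ref{exmp2-5}: the flag complexes ${\mathfrak F}(\mathcal P)$ of the five polytopes $\mathcal P\in\{\alpha_n,\beta_n,\text{24-cell},\text{icosahedron},\text{600-cell}\}$ are exactly the Coxeter complexes $\Sigma(W,S)$ of the Coxeter systems $\textsf{A}_n$, $\textsf{B}_n=\textsf{C}_n$, $\textsf{F}_4$, $\textsf{H}_3$, $\textsf{H}_4$, respectively. In every one of these cases the rank $n$ of the polytope coincides with $|S|$, so the integer $n$ has a single unambiguous meaning throughout the argument.

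Next, Proposition \ref{prop3-5}(2) asserts that every zigzag in $\Sigma(W,S)={\mathfrak F}(\mathcal P)$ has length $nh$, where $h=h(W,S)$ is the corresponding Coxeter number. On the other hand, by Proposition \ref{prop3-6} --- which, as noted in the paragraph preceding the statement, extends verbatim to abstract polytopes and therefore also covers the $24$-cell (which is not itself a simplicial complex) --- the zigzags in ${\mathfrak F}(\mathcal P)$ are in bijective correspondence with the generalized zigzags in $\mathcal P$, a zigzag of length $nl$ in ${\mathfrak F}(\mathcal P)$ being induced by a generalized zigzag of length $l$ in $\mathcal P$. Comparing the two length formulas gives $nl=nh$, hence $l=h$; since this holds for every generalized zigzag, all generalized zigzags in $\mathcal P$ share the common length $h$.

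It then remains only to read off the Coxeter numbers from the standard table \cite[p.80, Table 2]{Hum}, namely $h(\textsf{A}_n)=n+1$, $h(\textsf{B}_n)=2n$, $h(\textsf{F}_4)=12$, $h(\textsf{H}_3)=10$, and $h(\textsf{H}_4)=30$, and to substitute them into the equality $l=h$. I expect no genuine difficulty in this argument: the content is entirely carried by the two propositions. The single point deserving care is the bookkeeping of the factor $n$ --- one must confirm that the $n$ governing the length ratio in Proposition \ref{prop3-6} is the same $n$ occurring in the formula $nh$ of Proposition \ref{prop3-5}, which is guaranteed precisely because both equal the common rank $|S|$ of the polytope and of its Coxeter system.
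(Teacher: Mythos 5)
Your proposal is correct and follows exactly the paper's own route: identify the flag complexes of the five polytopes with the Coxeter complexes of $\textsf{A}_{n}$, $\textsf{B}_{n}$, $\textsf{F}_{4}$, $\textsf{H}_{3}$, $\textsf{H}_{4}$ (Example \ref{exmp2-5}), equate the zigzag length $nh$ from Proposition \ref{prop3-5}(2) with the length $nl$ from Proposition \ref{prop3-6} (extended to abstract polytopes for the $24$-cell), and conclude $l=h$. Your extra care about the factor $n$ being the same in both propositions is a sound, if minor, addition to what the paper leaves implicit.
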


Note that all values from Corollary 1 were given in \cite{DDS-paper,D-DS}, but 
the connection with Coxeter numbers is new.

Now, we consider the half-cube $\frac{1}{2}\gamma_{n}$
and the $\textsf{E}$-polytopes $2_{21}$, $3_{21}$, $4_{21}$
associated to the Coxeter systems $\textsf{D}_{n}$ and 
$\textsf{E}_i$, $i=6,7,8$ (respectively).
The zigzag lengths of these polytopes
are not equal to the corresponding Coxeter numbers.
The Coxeter numbers of $\textsf{E}_i$, $i=6,7,8$, are $12,18, 30$ and, by \cite{DDS-paper,D-DS},
the zigzag lengths of the ${\textsf E}$-polytopes are $18,90,36$.
The Coxeter number of $\textsf{D}_{n}$ is equal to $2(n-1)$
and the zigzag length of $\frac{1}{2}\gamma_{n}$ can be found in \cite{DDS-paper,D-DS} for $n\le 13$.

The following example explains this non-coincidence.

\begin{exmp}{\rm
For every $i\in [n-1]$ we denote by ${\mathcal F}_{i}$ the set of all $i$-faces in $\beta_{n}$. 
Then ${\mathcal F}_{n-1}$ can be presented as the disjoint union of two subsets 
${\mathcal F}_{+}$ and ${\mathcal F}_{-}$ satisfying the following condition:
for any $X,Y\in {\mathcal F}_{s}$, $s\in \{+,-\}$ 
the number  $n-|X\cap Y|$ is even and 
this number is odd if $X\in {\mathcal F}_{+}$ and $Y\in {\mathcal F}_{-}$.
Suppose that $(W,S)$ is the Coxeter system of type $\textsf{D}_{n}$ (the corresponding diagram is on the figure below).
\begin{center}
\vspace{1ex}
\includegraphics[width=0.4\textwidth]{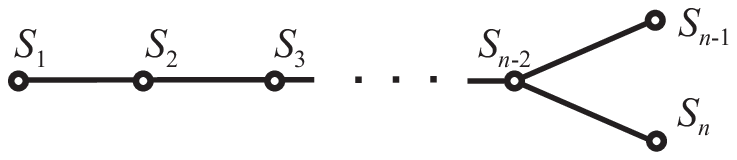}
\vspace{1ex}
\end{center}
Recall that the vertices of the Coxeter complex $\Sigma(W,S)$
are all subsets $wW^{i}$, $w\in W$ and $i\in [n]$.
For every $i\in [n-2]$ the vertices of type $wW^{i}$ correspond to the elements of ${\mathcal F}_{i-1}$
and the vertices of types $wW^{n-1}$ and $wW^{n}$ correspond to 
the elements of ${\mathcal F}_{+}$ and ${\mathcal F}_{-}$ (respectively),
see \cite[Example 2.7]{Pankov-book} for the details.
For every $i\in [n-1]$ the $i$-faces of the hypercube $\gamma_{n}$ 
can be identified with the elements of ${\mathcal F}_{n-1-i}$.
In the case when the vertex set of the half-cube $\frac{1}{2}\gamma_{n}$ is ${\mathcal F}_{+}$,
the $2$-faces of $\frac{1}{2}\gamma_{n}$ are the element of 
${\mathcal F}_{n-3}\cup {\mathcal F}_{-}$.
In other words, the $2$-faces of $\frac{1}{2}\gamma_{n}$
give two different types of vertices in the Coxeter complex. 
This means that zigzags in the Coxeter complex of $\textsf{D}_{n}$ cannot be obtained from
generalized zigzags in $\frac{1}{2}\gamma_{n}$ as it was described in Proposition \ref{prop3-6}.
For the same reason zigzags in the Coxeter complexes of $\textsf{E}_i$, $i=6,7,8$
are not induced by generalized zigzags of the associated $\textsf{E}$-polytopes.
}\end{exmp}

\section{$Z$-connectedness of faces}
In this section, we suppose that $\Delta$ is a thin chamber complex of rank $n$.
Then every $\Gamma_{k}(\Delta)$ is connected.
We say that two faces $X$ and $Y$  in $\Delta$ are $z$-{\it connected} if there is a zigzag 
such that $X$ and $Y$ are contained in some flags of this zigzag.

\begin{exmp}\label{exmp4-1}{\rm
In the $n$-simplex $\alpha_{n}$, any two faces are $z$-connected.
This easily follows from the fact that every zigzag is defined by a certain enumeration of the set $[n+1]$
(Example \ref{exmp3-1}).
}\end{exmp}

\begin{exmp}\label{exmp4-2}{\rm
If $n\ge 3$, then the complex $\beta_{n}$ contains pairs of faces which are not $z$-connected.
Consider, for example, the edges
$$\{i,j\}\;\mbox{ and }\;\{i,-j\}.$$
Up to a cyclic permutation, the $0$-shadow of a zigzag containing $\{i,j\}$ is of type
$$i,j,i_{1},\dots,i_{n-2},-i,-j,-i_{1},\dots,-i_{n-2},$$
where each of $i_{1},\dots,i_{n-2}$ is not equal to $\pm i$ or $\pm j$ (see Example \ref{exmp3-2}).
The corresponding zigzag does not contain $\{i,-j\}$.
Similarly, we can show that the $k$-faces
$$\{i_{1},\dots,i_{k},j\}\;\mbox{ and }\;\{i_{1},\dots,i_{k},-j\}$$
are not $z$-connected if $k<n-1$.
Recall that $\Gamma_{n-1}(\beta_{n})$ is the $n$-dimensional cube graph.
It is easy to see that every geodesic of this graph is contained in the $(n-1)$-shadow of a certain zigzag.
}\end{exmp}

\subsection{$Z$-connectedness of facets and distance normal geodesics}
If $\Delta$ is $\alpha_{n}$ or $\beta_{n}$, then
\begin{equation}\label{eq4-1}
d(X,Y)=n- |X\cap Y|
\end{equation}
for any two facets $X,Y$. Recall that $d(X,Y)$ is the path distance between $X$ and $Y$ in $\Gamma_{n-1}(\Delta)$.
For the general case this distance formula fails.
Consider, for example, an $n$-gonal bipyramid;
if $n\ge 6$, then it contains two faces $X,Y$ with a common vertex and such that $d(X,Y)>2$. 
In the general case, we have 
$$d(X,Y)\ge n-|X\cap Y|$$
for any two facets $X,Y$ in $\Delta$.

We say that facets $X$ and $Y$ satisfying $d(X,Y)\le n$ form a {\it distance normal pair} if the equality \eqref{eq4-1} holds.
Any two adjacent facets form a distance normal pair.
If the path distance between two facets is equal to $2$, then they form a distance normal pair.
It was noted above that any two facets in $\alpha_{n}$ or $\beta_{n}$ form a distance normal pair.
Every geodesic joining distance normal pair $X,Y$ will be called {\it distance normal geodesics} if $d(X,Y)\le n$.

\begin{exmp}\label{exmp4-3}{\rm
Let $(W,S)$ be a Coxeter system and $|S|=n$.
Consider the associated Coxeter complex $\Sigma(W,S)$.
By Subsection 2.2,
the graph $\Gamma_{n-1}(\Sigma(W,S))$ can be naturally identified with the Cayley graph ${\rm C}(W,S)$. 
Let $s_{i}$ and $s_{j}$ be non-com\-muting elements of $S$.
The path distance between the identity element $e$ and $w=s_{i}s_{j}s_{i}$ in ${\rm C}(W,S)$ is equal to $3$.
On the other hand, $e$ and $w$ correspond to the facets
$\{W^{1},\dots,W^{n}\}$ and $\{wW^{1},\dots,wW^{n}\}$,
respectively.
Since we have $wW^{k}=W^{k}$ for every $k\ne i,j$, the intersection of these facets
contains precisely $n-2$ vertices and they do not form a distance normal pair.
}\end{exmp}

The latter example can be generalized as follows.

\begin{exmp}\label{exmp4-4}{\rm
Let $w$ and $v$ be elements of $W$ such that $l(w^{-1}v)\le n$. 
Then $d(w,v)\le n$.
The facets of $\Sigma(W,S)$ corresponding to $w$ and $v$
form a distance normal pair if and only if there is a reduced expression for $w^{-1}v$
whose elements are mutually distinct
(the existence of such an expression implies that all reduced expressions of $w^{-1}v$
satisfy the same condition).
}\end{exmp}

Now, we consider facets $X$ and $Y$ such that $d(X,Y)>n$.
We say that $X$ and $Y$ form a {\it distance normal pair} if there exists a geodesic
$$
 X=X_{0},X_{1},\dots,X_{m}=Y,
$$
where any two $X_{i},X_{j}$ satisfying $d(X_{i},X_{j})\le n$ form a distance normal pair.
Every such a geodesic will be called {\it distance normal}.
The fact that two facets are connected by a distance normal geodesic does not guarantee that 
every geodesic connecting them is distance normal.

\begin{exmp}{\rm
It is not difficult to construct a thin chamber complex of rank $3$ satisfying the following conditions:
\begin{enumerate}
\item[$\bullet$] there is a distance normal pair of faces $X,Y$ such that $d(X,Y)=4$,
\item[$\bullet$] there is a face $X'$ adjacent to $X$ and intersecting $Y$ precisely in a vertex,
\item[$\bullet$] $d(X',Y)=3$. 
\end{enumerate}
So, $X'$ and $Y$ do not form a distance normal pair.
Let $X',X_{1},X_{2},Y$ be a geodesic connecting $X'$ and $Y$.
Then $X,X',X_{1},X_{2},Y$ is a geodesic from $X$ to $Y$ which  is not distance normal. 
}\end{exmp}

The $(n-1)$-shadows of zigzags in $\Delta$ are closed (not necessarily simple) paths in $\Gamma_{n-1}(\Delta)$.
We say that a path $X_{1},\dots,X_{m}$ is contained in a path $Y_{1},\dots,Y_{k}$
(or $Y_{1},\dots,Y_{k}$ contains $X_{1},\dots,X_{m}$) if 
there is a number $j$ such that $Y_{j+i}=X_{i}$ for every $i\in \{0,1,\dots,m\}$. 
It is clear that every path contained in a distance normal geodesic is a distance normal geodesic.

\begin{lemma}\label{lemma4-1}
If $Z$ is a simple zigzag in $\Delta$, then every geodesic of $\Gamma_{n-1}(\Delta)$ contained in the $(n-1)$-shadow of $Z$  is distance normal.
\end{lemma}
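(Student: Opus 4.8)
The plan is to parametrize the $(n-1)$-shadow of $Z$ as a cyclic sequence of facets $X_i=\{x_i,x_{i+1},\dots,x_{i+n-1}\}$, $i\in{\mathbb Z}/l{\mathbb Z}$, where $\{x_i\}$ is the $0$-shadow and $x_{i+l}=x_i$; since $Z$ is simple, the vertices $x_0,\dots,x_{l-1}$ are mutually distinct. Unwinding the definition of a distance normal geodesic (the endpoint condition when $d\le n$, and recursively the condition on all close sub-pairs when $d>n$), the whole statement reduces to one pairwise claim: for any two facets $X_i,X_j$ lying on the given geodesic $G$ with $d(X_i,X_j)\le n$, the equality \eqref{eq4-1} holds, i.e. $|X_i\cap X_j|=n-d(X_i,X_j)$.

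First I would invoke the elementary fact that every subpath of a geodesic is again a geodesic. Writing $G$ as a contiguous piece of the shadow and taking $i<j$ with $X_i,X_j$ on $G$, this gives $d(X_i,X_j)=j-i$, so the hypothesis $d(X_i,X_j)\le n$ becomes $j-i\le n$. The ``$\ge$'' half of \eqref{eq4-1} is then immediate: the block $x_j,\dots,x_{i+n-1}$ consists of $n-(j-i)$ vertices common to $X_i$ and $X_j$, distinct by simplicity, so $|X_i\cap X_j|\ge n-(j-i)$ (this is also forced by the general bound $d(X,Y)\ge n-|X\cap Y|$ recorded before \eqref{eq4-1}). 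It remains to prove $|X_i\cap X_j|\le n-(j-i)$, equivalently that the $j-i$ vertices $x_i,\dots,x_{j-1}$ of $X_i$ all lie outside $X_j$; then $|X_i\setminus X_j|\ge j-i$ and the reverse inequality follows, giving \eqref{eq4-1} and hence a distance normal pair.

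The main obstacle is exactly this last point: excluding a ``wrap-around'' coincidence. A vertex $x_t$ with $i\le t\le j-1$ can belong to $X_j=\{x_j,\dots,x_{j+n-1}\}$ only through the periodicity $x_t=x_{t+l}$ with $t+l\in\{j,\dots,j+n-1\}$, and this forces $l\le(j-i)+n-1$, i.e. $j-i\ge l-n+1$. In particular, when $l\ge 2n$ the index window $[i,j+n-1]$ has length $(j-i)+n\le 2n\le l$, so no coincidence is possible and the argument is complete; this already settles every Coxeter complex, where $l=nh\ge 2n$ since $h\ge 2$. The delicate regime is $l<2n$: here a coincidence would make $X_i$ and $X_j$ share the extra vertex $x_t$, and I would argue that the shadow arc $X_i,\dots,X_j$ then cannot be a geodesic at all, since it drops $x_t$ at the step $X_t\to X_{t+1}$ and only re-inserts it at $X_{t+l-n}\to X_{t+l-n+1}$, a detour through the $l-n$ facets not containing the common vertex $x_t$. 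Making rigorous the assertion that a geodesic need never drop a vertex common to its two endpoints (staying in the star of $x_t$) is the hard part; for the small-length complexes that arise in this regime---such as $\alpha_n$, where $\Gamma_{n-1}$ has diameter too small for any shadow-geodesic to realize a wrap-around pair---it can be checked directly, and once the coincidence is excluded the equality \eqref{eq4-1} holds for every relevant pair, so $G$ is distance normal.
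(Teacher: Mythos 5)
Your reduction is sound, and it is worth saying that it is already more of a proof than the paper supplies: the paper's own proof of this lemma consists of the words ``Easy verification''. Since subpaths of geodesics are geodesics, the lemma does reduce to your pairwise claim that shadow facets $X_i,X_j$ on the geodesic with $j-i\le n$ satisfy $|X_i\cap X_j|=n-(j-i)$, and since simplicity makes the $l$ vertices of one period mutually distinct, the only possible failure is the wrap-around coincidence you isolate, which forces $j-i\ge l-n+1$ and hence $l\le 2n-2$. Your treatment of the case $l\ge 2n$ (covering in particular all Coxeter complexes) is complete and correct; the implicit ``easy verification'' is easy exactly in that regime.

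The regime $n<l\le 2n-2$, which you explicitly leave open, is a genuine gap, and it cannot be closed: the principle you propose to use --- that a geodesic never drops a vertex common to its two endpoints, so a wrap-around pair cannot lie on a geodesic --- is false under the lemma's hypotheses, which require only $Z$ to be simple, not $\Delta$ to be $z$-simple. Concretely, let $n=4$, take vertices $\{0,1,\dots,5\}\cup\{p,q,r\}$ and the six ``ring'' facets $F_i=\{i,i+1,i+2,i+3\}$ with entries read mod $6$. The twelve ridges of this ring lying in only one $F_i$ are exactly the twelve triangles of the three tetrahedra $\{0,1,3,4\}$, $\{1,2,4,5\}$, $\{0,2,3,5\}$; close the complex by coning each of these three $2$-spheres with a new vertex, i.e. add $\{0,1,3,p\}$, $\{0,1,4,p\}$, $\{0,3,4,p\}$, $\{1,3,4,p\}$ and the analogous four facets with $q$ over $\{1,2,4,5\}$ and with $r$ over $\{0,2,3,5\}$. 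Every ridge then lies in exactly two facets and the dual graph is connected, so this is a thin chamber complex of rank $4$. The periodic sequence $0,1,2,3,4,5$ satisfies (Z1) and (Z2), hence is the $0$-shadow of a simple zigzag of length $6=2n-2$ whose $(n-1)$-shadow is the $6$-cycle $F_0,\dots,F_5$. The neighbours of $F_0$ are $F_1,F_5,\{0,1,3,p\},\{0,2,3,r\}$ and the neighbours of $F_3$ are $F_2,F_4,\{0,3,4,p\},\{0,3,5,r\}$; these sets are disjoint, so $d(F_0,F_3)=3$ and $F_0,F_1,F_2,F_3$ is a geodesic contained in the shadow, yet $|F_0\cap F_3|=|\{0,3\}|=2\neq n-3$, so this geodesic is not distance normal. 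Thus the wrap-around configuration you were trying to exclude can genuinely occur on a geodesic: your argument cannot be completed as proposed, and what you have actually uncovered is that the lemma in this generality fails, the statement being safe only under additional hypotheses such as $l\ge 2n$ or $z$-simplicity of $\Delta$ itself (the setting in which the lemma is applied in Corollary \ref{cor4-1}).
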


\begin{proof}
Easy verification.
\end{proof}

\begin{theorem}\label{theorem4-1}
Every distance normal geodesic of $\Gamma_{n-1}(\Delta)$ is contained in the $(n-1)$-shadow of a certain zigzag of $\Delta$
and the following assertions are fulfilled: 
\begin{enumerate}
\item[(1)] if this geodesic is of length $m\le n$, then  there are at most $(n-m)!$ zigzags whose $(n-1)$-shadows contain the geodesic;
\item[(2)]
if the length of the geodesic is greater than $n$, then it is contained in the $(n-1)$-shadow of the unique zigzag.
\end{enumerate}
\end{theorem}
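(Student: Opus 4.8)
The plan is to analyze a distance normal geodesic $X_0,\dots,X_m$ of $\Gamma_{n-1}(\Delta)$ through the single vertex removed and the single vertex added at each step. Writing $X_i=(X_{i-1}\setminus\{r_i\})\cup\{a_i\}$, the first task is a structural lemma for short geodesics: if $L\le n$ and $X'_0,\dots,X'_L$ is a geodesic joining a distance normal pair, so that $|X'_0\cap X'_L|=n-L$, then every step removes a vertex of $X'_0\setminus X'_L$ and adds a vertex of $X'_L\setminus X'_0$. I would prove this with the two potentials $\phi(i)=|X'_i\setminus X'_L|$ and $\psi(i)=|X'_i\setminus X'_0|$: each changes by at most $1$ per step, $\phi$ runs from $L$ to $0$ and $\psi$ from $0$ to $L$ in exactly $L$ steps, so both are strictly monotone, forcing $r_i\in X'_0\setminus X'_L$ and $a_i\in X'_L\setminus X'_0$ at every step. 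A counting argument then shows the $r_i$ (resp. $a_i$) are distinct and exhaust $X'_0\setminus X'_L$ (resp. $X'_L\setminus X'_0$); in particular $X'_0\cap X'_L\subseteq X'_i$ for all $i$, and no vertex is both removed and later re-added.

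For existence and for (1), I apply this with $L=m\le n$. I enumerate $X_0$ as $(y_0,\dots,y_{n-1})$ by setting $y_i=r_{i+1}$ for $i<m$ and letting $y_m,\dots,y_{n-1}$ be any ordering of the common part $C=X_0\cap X_m$ (which has $n-m$ vertices), and I put $y_{n+i}=a_{i+1}$ for $0\le i<m$. Since the removed vertex at each step is the ``oldest'' one $y_i$ and the shared ridge $X_i\cap X_{i+1}$ lies in only the two facets $X_i,X_{i+1}$ (thinness), successive applications of $T$ to the flag $(y_0,\dots,y_{n-1})$ reproduce $X_1,\dots,X_m$; extending this finite sequence forward by $T$ (each step determined by thinness, so (Z1) and (Z2) hold) and invoking Proposition \ref{prop3-3} yields a zigzag whose $(n-1)$-shadow contains the geodesic. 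Conversely, any containing zigzag is the $T$-orbit of a flag whose $0$-face enumeration is $(r_1,\dots,r_m)$ followed by an ordering of $C$; there are at most $(n-m)!$ such flags, giving assertion (1).

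For (2) I use that a distance normal geodesic of length $m>n$ has $|X_i\cap X_{i+n}|=0$ for every admissible $i$, since $d(X_i,X_{i+n})=n$ and the pair is distance normal. Applying the structural lemma to the first window $X_0,\dots,X_n$ (now $C=\varnothing$) shows that the enumeration $(y_0,\dots,y_{n-1})=(r_1,\dots,r_n)$ of $X_0$ is \emph{forced}, hence the starting flag, and therefore the whole zigzag, is uniquely determined. The remaining point is to show this unique zigzag really contains all of $X_0,\dots,X_m$, i.e. a first-in--first-out law: for $n<k\le m$ the vertex removed at step $k$ is the one added at step $k-n$, so that $r_k=y_{k-1}$ throughout. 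This follows from the structural lemma on two overlapping length-$n$ windows: in $X_{k-n},\dots,X_k$ the removed vertices are exactly the vertices of $X_{k-n}$, while in $X_{k-n-1},\dots,X_{k-1}$ the added vertices are exactly those of $X_{k-1}$, so $a_{k-n}\in X_{k-1}$ is not removed before step $k$ yet must be removed within that window, hence at step $k$. Feeding $r_k=y_{k-1}$ and $a_k=y_{k+n-1}$ back into the $T$-recursion gives $X_i=\{y_i,\dots,y_{i+n-1}\}$ for all $i\le m$.

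I expect the main obstacle to be precisely this first-in--first-out law, that is, controlling the long geodesic globally rather than one window at a time. The delicate points are ruling out that a vertex is removed and then re-entered (which the potential argument forbids only inside a window of length $\le n$, using the inequality $d(X,Y)\ge n-|X\cap Y|$ to see that a cancelled pair of moves would make the subpath too long to be a geodesic) and checking the boundary indices where the two overlapping windows remain available, where $k\ge n+1$ guarantees $k-n-1\ge 0$ so that no genuine gap arises. Once the law is in place, existence, the counting bound, and uniqueness all follow uniformly from the $T$-recursion and Proposition \ref{prop3-1}.
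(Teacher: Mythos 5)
Your proposal is correct, and it reaches the theorem by a genuinely different route than the paper. For $m\le n$ the paper argues by induction on $m$: the base case $m=1$ is handled by explicit flags $F_{\delta}$, and in the inductive step one takes a zigzag containing $X_{0},\dots,X_{m-1}$, locates the unique vertex $x_{t}$ of $X_{0}\cap X_{m-1}$ not lying in $X_{m}$, and performs a flag surgery (placing $x_{t}$ immediately after $x_{0},\dots,x_{m-2}$ and permuting the remaining vertices of $X_{0}\cap X_{m}$) to produce the at most $(n-m)!$ zigzags. Your exchange lemma, proved by the two monotone potentials $|X_{i}\setminus X_{L}|$ and $|X_{i}\setminus X_{0}|$, replaces this induction entirely: it pins down at once that the removed vertices $r_{1},\dots,r_{m}$ are distinct and exhaust $X_{0}\setminus X_{m}$ while the added vertices exhaust $X_{m}\setminus X_{0}$, so the containing zigzags are exactly the $T$-orbits of flags enumerating $X_{0}$ as $(r_{1},\dots,r_{m})$ followed by an ordering of $X_{0}\cap X_{m}$; existence, the bound $(n-m)!$, and the forced-prefix uniqueness all drop out of this single description (the same forced-prefix idea the paper uses only in its $m=n$ subcase). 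For $m>n$ the two proofs are closer in spirit, since both exploit overlapping length-$n$ windows with disjoint endpoints, but the mechanisms differ: the paper stitches the unique zigzags $Z_{i}$ containing $X_{i},\dots,X_{i+n}$ by applying assertion (1) with $m=n-1$ to the overlaps $X_{i+1},\dots,X_{i+n}$, forcing $Z_{i}=Z_{i+1}$, whereas you derive the first-in--first-out identity $r_{k}=a_{k-n}$ directly from the exchange lemma on the windows $X_{k-n-1},\dots,X_{k-1}$ and $X_{k-n},\dots,X_{k}$ (indeed $a_{k-n}$ survives into $X_{k-1}$ but cannot lie in $X_{k}$, hence equals $r_{k}$), which yields the explicit $0$-shadow $X_{i}=\{y_{i},\dots,y_{i+n-1}\}$ of the unique containing zigzag. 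What your route buys is a uniform, non-inductive and in effect sharpened statement: an explicit description of the $0$-shadow of every zigzag through a distance normal geodesic; what the paper's route buys is brevity, since the long-geodesic case is disposed of by citing the already-proved counting bound rather than by a new combinatorial identity. Two cosmetic remarks: your appeal to Proposition \ref{prop3-3} is unnecessary (the $T$-orbit of the constructed flag is a zigzag by definition), and the worry in your final paragraph about cancelled moves outside a window is moot, since the FIFO law only ever invokes the exchange lemma inside windows of length at most $n$.
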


Theorem \ref{theorem4-1} together with Lemma \ref{lemma4-1} give the following.

\begin{cor}\label{cor4-1}
Suppose that $\Delta$ is $z$-simple. 
A geodesic of $\Gamma_{n-1}(\Delta)$  is contained in the $(n-1)$-shadow of a certain zigzag if and only if it is distance normal.
\end{cor}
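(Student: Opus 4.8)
The plan is to derive the corollary directly from Theorem~\ref{theorem4-1} and Lemma~\ref{lemma4-1}, treating the two implications separately and noting that the hypothesis that $\Delta$ is $z$-simple is used in only one of them. Throughout, let $G$ denote a geodesic of $\Gamma_{n-1}(\Delta)$.

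For the implication asserting that every distance normal geodesic is contained in the $(n-1)$-shadow of some zigzag, I would simply invoke Theorem~\ref{theorem4-1}, whose opening assertion is precisely this statement. This direction requires no additional hypothesis: it is proved for an arbitrary thin chamber complex (the standing assumption of Section~4), so the assumption of $z$-simplicity plays no role here. For the corollary I only need the existence clause of the theorem, not its quantitative parts (1) and (2).

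For the converse, suppose that $G$ is contained in the $(n-1)$-shadow of a zigzag $Z$. Since $\Delta$ is $z$-simple, every zigzag, and in particular $Z$, is simple. Lemma~\ref{lemma4-1} then applies to $Z$ and yields that every geodesic contained in the $(n-1)$-shadow of $Z$ is distance normal; hence $G$ is distance normal. This is the sole place where $z$-simplicity enters: it is exactly what is needed to promote Lemma~\ref{lemma4-1}, which speaks only of simple zigzags, into a statement about the particular zigzag $Z$ that happens to contain $G$.

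The substantive content lies entirely in the two cited results rather than in the corollary itself, so I do not expect a genuine obstacle at this stage. The only point demanding care is bookkeeping: one must confirm that the existence clause of Theorem~\ref{theorem4-1} is stated without reference to the length of the geodesic, so that it covers both the $m\le n$ and the $m>n$ regimes uniformly, and that the convention for a path being contained in another path is applied identically in Theorem~\ref{theorem4-1} and in Lemma~\ref{lemma4-1}, so that the two results compose without a gap.
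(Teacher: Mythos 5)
Your proposal is correct and follows exactly the paper's route: the paper derives the corollary by the same two-step combination, citing Theorem~\ref{theorem4-1} for the direction that a distance normal geodesic lies in some zigzag's $(n-1)$-shadow, and Lemma~\ref{lemma4-1} (applicable to the containing zigzag because $z$-simplicity makes every zigzag simple) for the converse. Your additional observation that $z$-simplicity is needed only for the converse is accurate and consistent with the paper.
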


\begin{rem}{\rm
Example \ref{exmp4-4} shows that 
for the Coxeter complexes Theorem \ref{theorem4-1} easily follows from
Proposition \ref{prop3-5}.
}\end{rem}

\subsection{Proof of Theorem \ref{theorem4-1}}
Let $X_{0},X_{1},\dots,X_{m}$ be a distance normal geodesic in $\Gamma_{n-1}(\Delta)$.
First, we consider the case when $m\le n$ and prove the statement by induction.

If $m=1$,  then we take vertices $x_{0},x_{1},\dots,x_{n}$ such that 
$$X_{0}=\{x_{0},x_{1},\dots,x_{n-1}\}\;\mbox{ and }\;X_{1}=\{x_{1},\dots,x_{n}\}.$$
For every permutation $\delta$ on the set $[n-1]$ 
we consider the sequence
$$x_{0},x_{\delta(1)},\dots,x_{\delta(n-1)}$$
and denote by $F_{\delta}$ the associated flag
$$\{x_{0}\}\subset \{x_{0},x_{\delta(1)}\}\subset\dots\subset 
\{x_{0},x_{\delta(1)},\dots,x_{\delta(n-1)}\}=X_{0}.$$
Since the vertices $x_{\delta(1)},\dots,x_{\delta(n-1)}$ belong to $X_{1}$,
the facet of the flag $T(F_{\delta})$ is $X_{1}$.
It is easy to see that every zigzag whose $(n-1)$-shadow contains the path $X_{0},X_{1}$ is of 
type $\{T^{i}(F_{\delta})\}_{i\in {\mathbb N}}$.
Since $\Delta$ is not assumed to be $z$-simple,
the zigzags corresponding to the distinct flags $F_{\delta}$ and $F_{\gamma}$ may be coincident
or the zigzag defined by $F_{\delta}$ is the reverse of the zigzag defined by $F_{\gamma}$.
Therefore, there are at most $(n-1)!$ zigzags satisfying the required condition.

Let $m>1$. Then $X_{0}, X_{1},\dots X_{m-1}$ is  a distance normal geodesic and,
by the inductive hypothesis, it is contained in the $(n-1)$-shadow of a certain zigzag $Z$. 
Let $\{x_{i}\}_{i\in {\mathbb N}}$ be the $0$-shadow of $Z$.
We suppose that the vertices $x_{0},x_{1},\dots,x_{n-1}$ belong to $X_{0}$.
Then 
$$X_{i}=\{x_{i},\dots,x_{i+n-1}\}$$
for every $i\in [m-1]$.

If $n=m$, then $X_{0}\cap X_{m}=\emptyset$, in particular, $x_{n-1}\not\in X_{n}$.
Since 
$$X_{n-1}=\{x_{n-1},\dots,x_{2n-2}\}$$ 
and $X_{n}$ are adjacent,
the vertices $x_{n},\dots,x_{2n-2}$ belong to $X_{n}$.
This implies that $X_{n}$ coincides with the facet
$\{x_{n},\dots,x_{2n-1}\}$ and the geodesic $X_{0},X_{1},\dots,X_{n}$ is contained in 
the $(n-1)$-shadow of $Z$.  
This is the unique zigzag whose $(n-1)$-shadow contains this geodesic. 
Indeed, every $x_{i}$, $i\in \{0,1,\dots, n-1\}$ is the unique vertex belonging to $X_{i}\setminus X_{i+1}$
and $Z$ is completely determined by the list of the first $n$ vertices in the $0$-shadow.

Consider the case when $m<n$.
The face 
$$A=\{x_{m-1},\dots,x_{n-1}\}$$
consists of $n-m+1\ge 2$ vertices and coincides with the intersection of $X_{0}$ and $X_{m-1}$.
Since $X_{0}$ and $X_{m}$ form a distance normal pair and $d(X_{0},X_{m})=m$,
the face $A\cap X_{m}$ contains precisely $n-m$ vertices. 
So, there is the unique number $t\in \{m-1,\dots,n-1\}$
such that $x_{t}\not\in X_{m}$.
For every permutation $\delta$ on the set 
$\{m-1,\dots,n-1\}\setminus {t}$ we consider the vertex sequence
\begin{equation}\label{eq-zig}
x_{0},x_{1},\dots,x_{m-2},x_{t},x_{\delta(m-1)},\dots,\widehat{x_{t}},\dots,x_{\delta(n-1)}
\end{equation}
(the symbol $\,\widehat{}\,$ means that the corresponding element is omitted)
and denote by $F_{\delta}$ the flag obtained from this sequence.
This flag defines the zigzag 
$$Z_{\delta}=\{T^{i}(F_{\delta})\}_{i\in {\mathbb N}}.$$
Obviously, the first $n$ vertices in the $0$-shadow of $Z_{\delta}$ are \eqref{eq-zig}.
The next $m-1$ vertices are $x_{n},\dots,x_{n+m-2}$ which means that 
the $(n-1)$-shadow of $Z_{\delta}$ contains
the geodesic $X_{0},X_{1},\dots,X_{m-1}$.
The $m$-th element in the $(n-1)$-shadow of $Z_{\delta}$
is adjacent to $X_{m-1}$ and does not contain $x_{t}$.
This implies that it coincides with $X_{m}$. 
Therefore, the geodesic $X_{0},X_{1},\dots,X_{m}$
is contained in the $(n-1)$-shadow of $Z_{\delta}$.
Using the fact that $x_{i}$ is the unique vertex in $X_{i}\setminus X_{i+1}$ for $i\in \{0,1,\dots,m-2\}$,
$x_{t}$ is the unique vertex in $X_{m-1}\setminus X_{m}$ and
$$X_{0}\cap X_{m}=X_{0}\cap X_{1}\cap\dots\cap X_{m}= A\setminus\{x_{t}\},$$
we show that every zigzag whose $(n-1)$-shadow contains 
$X_{0},X_{1},\dots,X_{m}$ is of type $Z_{\delta}$.
As above, for distinct permutations $\delta$ and $\gamma$
the zigzags $Z_{\delta}$ and $Z_{\gamma}$ may be coincident.
For this reason, there are at most $(n-m)!$ such zigzags.

Now, we suppose that $m>n$.
For every $i\in \{0,1,\dots, m-n\}$ we have $$d(X_{i}, X_{i+n})=n$$ and  the facets $X_{i},X_{i+n}$ form a distance normal pair.
It was established above that there is the unique zigzag $Z_{i}$ whose $(n-1)$-shadow contains  the geodesic $X_{i},\dots, X_{i+n}$.
If $i<m-n$, then the geodesic $X_{i+1},\dots, X_{i+n}$ is contained in the $(n-1)$-shadows
of $Z_{i}$ and $Z_{i+1}$.
Since
$$d(X_{i+1},X_{i+n})=n-1,$$
 there is the unique zigzag whose $(n-1)$-shadow contains $X_{i+1},\dots, X_{i+n}$. 
Thus $Z_{i}$ coincides with $Z_{i+1}$ for every $i<n-m$.
This means that all zigzags $Z_{i}$ are coincident and we get the claim.

\subsection{$Z$-connectedness of non-maximal faces}
Two faces $X$ and $Y$ of the same non-maximal rank $k\ge 1$ 
are said to be {\it weakly adjacent} if their intersection is a $(k-1)$-face 
and there is no face containing both $X$ and $Y$.
This is a modification of the relation defined  for non-maximal singular subspaces of polar spaces
\cite[Subsection 4.6.2]{Pankov-book}.
Note that the non-maximal faces of $\beta_{n}$ considered in Example \ref{exmp4-2} 
are weakly adjacent.

\begin{lemma}\label{lemma4-3}
For every $k\in[n-2]$ any pair of weakly adjacent $k$-faces in $\Delta$
cannot be connected by a simple zigzag.
\end{lemma}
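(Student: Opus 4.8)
The plan is to argue by contradiction, exploiting the explicit description of the $0$-shadow of a simple zigzag. Suppose that $X$ and $Y$ are weakly adjacent $k$-faces which are $z$-connected by a simple zigzag $Z$; thus $|X\cap Y|=k$ and no face of $\Delta$ contains $X\cup Y$. Since $X$ and $Y$ are $k$-faces lying in flags of $Z$, each of them is the $k$-face of the flag containing it, so both occur in the $k$-shadow of $Z$. Writing $\{x_i\}_{i\in\mathbb{N}}$ for the $0$-shadow (periodic of period $l$, with $x_0,\dots,x_{l-1}$ pairwise distinct because $Z$ is simple), every $k$-face appearing in $Z$ has the form $\{x_i,x_{i+1},\dots,x_{i+k}\}$ by \eqref{eq3-2}. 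Hence I may write $X=\{x_i,\dots,x_{i+k}\}$ and $Y=\{x_j,\dots,x_{j+k}\}$ with $i\not\equiv j\pmod l$.

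First I would reduce the computation of $|X\cap Y|$ to a counting problem about indices. Because $t\mapsto x_t$ is injective on $\mathbb{Z}/l\mathbb{Z}$, the number $|X\cap Y|$ equals the number of residues modulo $l$ lying in both windows $W_X=\{i,\dots,i+k\}$ and $W_Y=\{j,\dots,j+k\}$; these are two arcs of length $k+1$ in the cycle $\mathbb{Z}/l\mathbb{Z}$. At this point I would record the quantitative input that controls how the two arcs can meet: since the length of a zigzag satisfies $l>n$ and $k\le n-2$, we have $n\ge k+2$ and therefore $l\ge k+3$.

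The main step is the arc-overlap count, and I expect this bookkeeping of the cyclic (wraparound) cases to be the one delicate point. Let $d$ be the representative of $j-i$ in $\{1,\dots,l-1\}$, so $W_Y$ is $W_X$ shifted by $d$. If the arcs $W_X$ and $W_Y$ meet in a single sub-arc, then the overlap has size $k+1-d$ or $k+1-(l-d)$ according to which side the arcs meet on, and each of these equals $k$ precisely when $d=1$ or $d=l-1$. If instead the arcs meet in two sub-arcs, then necessarily $W_X\cup W_Y=\mathbb{Z}/l\mathbb{Z}$, so the overlap has size $2(k+1)-l$, which equals $k$ only if $l=k+2$, contradicting $l\ge k+3$. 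Thus $|X\cap Y|=k$ forces $d=1$ or $d=l-1$, i.e. $j\equiv i\pm 1\pmod l$.

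Finally I would derive the contradiction. Take $d=1$ (the case $d=l-1$ is symmetric), so that $j\equiv i+1$ and
$$X\cup Y=\{x_i,x_{i+1},\dots,x_{i+k+1}\}$$
consists of $k+2$ vertices with consecutive indices. By the description \eqref{eq3-2} of the flags $T^i(F)$, and since $k+1\le n-1$, this set is exactly the $(k+1)$-face of the flag $T^i(F)$; in particular it is a genuine face of $\Delta$ containing both $X$ and $Y$. This contradicts the second defining condition of weak adjacency, and completes the proof.
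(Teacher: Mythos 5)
Your proof is correct and follows essentially the same route as the paper's: identify $X$ and $Y$ with windows of $k+1$ consecutive vertices in the $0$-shadow, deduce from $|X\cap Y|=k$ and the simplicity of the zigzag that the windows are shifted by one, and conclude that $X\cup Y$ is a $(k+1)$-face (using $k\le n-2$), contradicting weak adjacency. The only difference is that the paper simply asserts the ``consecutive windows'' step, while you justify it with the cyclic arc-overlap count (including the wraparound case excluded by $l\ge k+3$), which is a welcome filling-in of detail rather than a different argument.
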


\begin{proof}
Let $\{x_{i}\}_{i\in {\mathbb N}}$ be the $0$-shadow 
of a zigzag connecting weakly adjacent $k$-faces $X$ and $Y$.
If the zigzag is simple, then there exists $i$ such that
$$X\cap Y=\{x_{i+1},\dots,x_{i+k}\},$$
one of $X,Y$ is $\{x_{i},\dots,x_{i+k}\}$ and the other is $\{x_{i+1},\dots,x_{i+k+1}\}$.
Since the rank $k$ is not maximal, 
we have $k+2\le n$ and
$$X\cup Y=\{x_{i},\dots,x_{i+k+1}\}$$
is a face which contradicts the fact that $X$ and $Y$ are weakly adjacent.
\end{proof}

A possible $z$-connectedness for two weakly adjacent faces is described 
in the following example.

\begin{exmp}{\rm
Let $\{x_{i}\}_{i\in {\mathbb N}}$ be the $0$-shadow of a zigzag.
The length $l$ of the zigzag is assumed to be sufficiently large.
Also, we suppose that the zigzag is not simple and there exist 
$i,j\in [l-1]$ and $k<n-1$
such that
$$i+k\le j,\;\;j+k\le l-1$$ 
and 
$$x_{i}=x_{j},x_{i+1}=x_{j+1},\dots,x_{i+k-1}=x_{j+k-1}.$$
If the vertices 
$x_{i-1},x_{i},\dots,x_{i+k-1},x_{j+k}$
do not form a face (this means that $j+k-i>n-1$),
then 
$$\{x_{i-1},x_{i},\dots,x_{i+k-1}\}\;\mbox{ and }\;
\{x_{j},\dots,x_{j+k-1},x_{j+k}\}$$
are weakly adjacent $k$-faces connected by our zigzag.
}\end{exmp}

\begin{lemma}\label{lemma4-4}
If $\Delta$ is $z$-simple and
any two edges of $\Delta$ are $z$-connected, then 
$\Delta$ is $3$-neigh\-borly.
\end{lemma}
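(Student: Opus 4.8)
The plan is to deduce the statement from Lemma \ref{lemma4-3} by first converting the hypotheses into a purely local condition on the graph $\Gamma_{0}(\Delta)$, and then upgrading that local condition to a global one using connectedness. Throughout I assume $n\ge 3$, which is forced by the very notion of $3$-neighborliness (a $2$-face exists only when $n\ge 3$) and which also guarantees $1\in[n-2]$, so that Lemma \ref{lemma4-3} applies to edges.

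First I would record that $\Delta$ contains no pair of weakly adjacent edges. Indeed, suppose $X=\{a,b\}$ and $Y=\{a,c\}$ were weakly adjacent $1$-faces. By hypothesis they are $z$-connected, and since $\Delta$ is $z$-simple the zigzag realizing this connection is simple; this contradicts Lemma \ref{lemma4-3} with $k=1$. Hence for any vertex $a$ and any two distinct edges $\{a,b\},\{a,c\}$ through $a$, the two edges are \emph{not} weakly adjacent, so some face contains both; equivalently $\{a,b,c\}\in\Delta$. In graph terms: in $\Gamma_{0}(\Delta)$, whose edges are exactly the $1$-faces of $\Delta$, any two neighbours $b,c$ of a vertex $a$ are themselves joined by an edge. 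Thus the neighbourhood of every vertex of $\Gamma_{0}(\Delta)$ is a clique.

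Next I would promote this to $2$-neighborliness. A connected graph in which every vertex-neighbourhood is a clique is complete: the relation ``$a=b$ or $\{a,b\}$ is an edge'' is reflexive and symmetric, and the clique property makes it transitive (if $a,c$ are both adjacent to $b$ then they are adjacent to each other), so it is an equivalence whose classes are pairwise non-adjacent cliques; since $\Gamma_{0}(\Delta)$ is connected, being a chamber complex, there is a single class and the graph is complete. Therefore every pair of vertices of $\Delta$ is an edge, i.e. $\Delta$ is $2$-neighborly.

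Finally, $3$-neighborliness follows immediately: given three distinct vertices $a,b,c$, the sets $\{a,b\}$ and $\{a,c\}$ are now edges by $2$-neighborliness, and by the first step they are not weakly adjacent, so $\{a,b,c\}$ is a face. The only real content of the argument is the first step; the main obstacle is making sure the $z$-simplicity hypothesis is genuinely used, so that the zigzag provided by $z$-connectedness is simple and Lemma \ref{lemma4-3} can legitimately be invoked. Once weakly adjacent edges are excluded, both the passage to $2$-neighborliness and the final upgrade to $3$-neighborliness are routine.
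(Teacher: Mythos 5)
Your proposal is correct and follows essentially the same route as the paper: rule out weakly adjacent edges via Lemma \ref{lemma4-3} together with $z$-simplicity and $z$-connectedness, use connectedness of $\Gamma_{0}(\Delta)$ to upgrade the resulting local clique property to all pairs of vertices being adjacent, and then obtain $3$-neighborliness by applying the non-weak-adjacency of edges once more. The only cosmetic difference is that you globalize via an equivalence-relation argument where the paper shortens paths step by step, which is the same idea in different words.
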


\begin{proof}
By Lemma \ref{lemma4-3}, 
there exist no pairs of weakly adjacent edges, 
i.e. any two edges with a common vertex are adjacent.
If $x_{0},x_{1},\dots,x_{m}$ is a path in $\Gamma_{0}(\Delta)$,
then the edges $x_{0}x_{1}$ and $x_{1}x_{2}$ are adjacent
which implies that $x_{0},x_{2}$ are adjacent vertices and 
$x_{0},x_{2},\dots,x_{m}$ is a path in $\Gamma_{0}(\Delta)$.
Step by step, we show that the vertices $x_{0}$ and $x_{m}$ are adjacent.
Since the graph $\Gamma_{0}(\Delta)$ is connected, 
any two distinct vertices are adjacent.
Let $x_{1},x_{2},x_{3}$ be three distinct vertices of $\Delta$.
It was established above that they are mutually adjacent.
Then the edges $x_{1}x_{2}$ and $x_{2}x_{3}$ are adjacent and
we get the claim.
\end{proof}

The previous lemma can be generalized as follows.

\begin{prop}\label{prop4-1}
If $\Delta$ is $z$-simple and there is a non-zero number $k<n-1$ such that 
any two faces of the same non-zero rank $\le k$ are $z$-connected, 
then $\Delta$ is $(k+2)$-neighborly.
\end{prop}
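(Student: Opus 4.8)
The plan is to generalize the argument of Lemma \ref{lemma4-4} by an induction on the rank of the faces, using the $z$-simplicity together with Lemma \ref{lemma4-3} at every stage. The conclusion to prove is that $\Delta$ is $(k+2)$-neighborly, i.e. that any $k+2$ distinct vertices span a face. The base case $k=1$ is exactly Lemma \ref{lemma4-4}, which gives $3$-neighborliness. So I would assume the result for $k-1$, namely that $\Delta$ is $(k+1)$-neighborly, and upgrade it to $(k+2)$-neighborly using the hypothesis that faces of rank $k$ are $z$-connected.

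The first step is to record the consequence of Lemma \ref{lemma4-3} at rank $k$: since $\Delta$ is $z$-simple and any two $k$-faces are $z$-connected, there are no pairs of weakly adjacent $k$-faces. Concretely, whenever two $k$-faces $X$ and $Y$ meet in a common $(k-1)$-face, the hypothesis that they are $z$-connected by a (necessarily simple) zigzag forces $X\cup Y$ to be a face; otherwise they would be weakly adjacent, contradicting Lemma \ref{lemma4-3}. Thus any two $k$-faces sharing a $(k-1)$-face are contained in a common $(k+1)$-face.

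The second step uses this together with the inductive hypothesis to produce $(k+2)$-faces. Let $v_{0},v_{1},\dots,v_{k+1}$ be $k+2$ distinct vertices; I want to show they form a face. By the inductive hypothesis ($(k+1)$-neighborliness) the two $k$-element subsets obtained by deleting $v_{k+1}$ and by deleting $v_{k}$ are both $k$-faces, and they share the common $(k-1)$-face $\{v_{0},\dots,v_{k-1}\}$. By the first step these two $k$-faces are contained in a common $(k+1)$-face, which can only be $\{v_{0},\dots,v_{k+1}\}$; hence this set is a face, giving $(k+2)$-neighborliness.

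The main obstacle I anticipate is verifying that two $k$-faces meeting in a $(k-1)$-face really are either adjacent in the sense of Subsection 2.1 or weakly adjacent, so that Lemma \ref{lemma4-3} applies cleanly. Two $k$-faces $X,Y$ with $X\cap Y$ a $(k-1)$-face either lie in a common $(k+1)$-face (in which case $X\cup Y$ is a face and we are done) or lie in no common face (in which case they are weakly adjacent and Lemma \ref{lemma4-3} forbids their $z$-connection); the hypothesis rules out the second alternative. Care is needed because $z$-connection merely means $X$ and $Y$ occur in flags of one zigzag, not that they are consecutive in its $0$-shadow, so I would invoke Lemma \ref{lemma4-3} exactly as stated rather than re-deriving the shadow structure. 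Once this dichotomy is established, the induction closes and the proposition follows.
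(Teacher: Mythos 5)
Your proposal is correct and takes essentially the same route as the paper's proof: both rest on Lemma \ref{lemma4-4} as the base case and on Lemma \ref{lemma4-3} (combined with $z$-simplicity and the $z$-connectedness hypothesis) to conclude that two $k$-faces sharing a $(k-1)$-face cannot be weakly adjacent, hence have their union a face, which then upgrades $(k+1)$-neighborliness to $(k+2)$-neighborliness. Your explicit induction on $k$ is just a repackaging of the paper's ``step by step'' argument, which runs the same upgrade internally over the ranks $i\in[k]$.
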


\begin{proof}
The statement coincides with Lemma \ref{lemma4-4} if $k=1$.
Let $k\ge 2$.
Lemma \ref{lemma4-4} states that $\Delta$ is $3$-neighborly.
It follows from Lemma \ref{lemma4-3} that 
for every $i\in [k]$ two $i$-faces are adjacent if their intersection
is a $(i-1)$-face. 
Therefore, if $X$ is a $4$-element subset in the vertex set,
then any two distinct $3$-element subsets of $X$ are adjacent $2$-faces
and $X$ is a $3$-face. 
Step by step, we establish that every subset consisting of not greater than $k+2$
vertices is a face.
\end{proof}

Proposition \ref{prop4-1} together with Fact \ref{fact-neighborly}
give the following.

\begin{cor}\label{cor4-2}
Suppose that, as in Proposition \ref{prop4-1}, $\Delta$ is $z$-simple and
there is a non-zero number $k<n-1$ such that 
any two faces of the same non-zero dimension $\le k$ are $z$-connected.
If $k>\lfloor n/2\rfloor -2$, then $\Delta$ is the $n$-simplex.
\end{cor}

\subsection*{Acknowledgment}
The authors thank Mathieu Dutour Sikiri\'c for interest and discussion
and Christophe Hohlweg for remarks concerning Coxeter groups.
Also, they are grateful to anonymous referee who appealed their attention to \cite{Williams}.

\end{document}